\newtheorem{thm}{Theorem}[section]
\newtheorem{lem}[thm]{Lemma}
\newtheorem{prop}[thm]{Proposition}
\theoremstyle{definition}
\theoremstyle{remark}
\newtheorem{rmk}[thm]{Remark}
\newcommand{\hsforall}{\hspace{1mm}\forall\hspace{1mm}}						 
\renewcommand{\Re}{\operatorname*{Re}}                             
\renewcommand{\Im}{\operatorname*{Im}}                             
\newcommand{\D}{\ensuremath{\,\mathrm{d}}}							         
\newcommand{\Mspacer}{\hspace{0.5mm}}                              
\newcommand{\M}[3]{#1_{#2\Mspacer#3}}                              
\newcommand{\Msup}[4]{#1_{#2\Mspacer#3}^{#4}}                      
\newcommand{\Msups}[5]{#1_{#2\Mspacer#3}^{#4\Mspacer#5}}           
\renewcommand{\geq}{\geqslant}                                     
\renewcommand{\leq}{\leqslant}                                     
\newcommand{\BE}{\begin{equation}}                                 
\newcommand{\EE}{\end{equation}}                                   
\newcommand{\be}{\begin{equation}}                                 
\newcommand{\ee}{\end{equation}}                                   
\newcommand{\BES}{\begin{equation*}}                               
\newcommand{\EES}{\end{equation*}}                                 
\newcommand{\BP}{\begin{pmatrix}}                                  
\newcommand{\EP}{\end{pmatrix}}                                    
\newcommand{\N}{\mathbb{N}}                                        
\newcommand{\Z}{\mathbb{Z}}                                        
\newcommand{\R}{\mathbb{R}}                                        
\newcommand{\C}{\mathbb{C}}                                        
\newcommand{\la}{\lambda}      
\newcommand\re{{\rm e}}     
     \newcommand\IMVP{initial multipoint value problem }                            
	\newcommand{\biggg}{\bBigg@{3}}
	\newcommand{\Biggg}{\bBigg@{4}}
	\newcommand{\bigggg}{\bBigg@{5}}
	\newcommand{\Bigggg}{\bBigg@{6}}
\newcommand{\superscript}[1]{\ensuremath{^{\textrm{#1}}}}
\newcommand{\Thns}[0]{\superscript{th}}
\newcommand{\Th}[0]{\Thns~}
\newcommand{\stns}[0]{\superscript{st}}
\newcommand{\st}[0]{\stns~}
\def\clap#1{\hbox to 0pt{\hss#1\hss}}
\def\mathclap{\mathpalette\mathclapinternal}
\def\mathclapinternal#1#2{\clap{$\mathsurround=0pt#1{#2}$}}
\numberwithin{equation}{section}
\title{Nonlocal and multipoint boundary value problems \\ for linear evolution equations}
\author{B. Pelloni$^1$ and  D. A. Smith$^2$\\
\footnotesize 1. Department of Mathematics, Heriot-Watt University, UK \\
\footnotesize 2. Yale-NUS College, Singapore \\
\footnotesize email\textup{: \texttt{dave.smith@yale-nus.edu.sg}}
}
\begin{document}
\maketitle

\abstract{
	We derive the solution representation for a large class of nonlocal boundary value problems for linear evolution PDEs with constant coefficients in one space variable.
	The prototypical such PDE is the heat equation, for which problems of this form model physical phenomena in chemistry and for which we formulate and prove a full result.
	We also consider the third order case, which is much less studied and has been shown by the authors to have very different structural properties in general.  
	
	The nonlocal conditions  we consider can be reformulated as \emph{multipoint conditions}, and then an explicit representation for the solution of the problem is obtained by an application of the Fokas transform method.
	The analysis is carried out under the assumption that the problem being solved is well posed, i.e.\ that it admits a unique solution.
	For the second order case, we also give criteria that guarantee well-posedness. 
}

\section{Introduction}

In a variety of applications of PDE models, classical boundary conditions imposed at the boundary of the domain are not representative of the particular phenomenon, and it is necessary to consider \emph{nonlocal} boundary conditions.
A particular example of such nonlocal conditions are \emph{multipoint} conditions relating the value of the solution at the boundary points with the values at some interior points.
A simple example of this is given in~\cite{bastys2005}, where motivation from physical applications, particularly in chemistry, can also be found.
Early work on three-point boundary conditions was done in~\cite{gupta1998,ma1997}, though these works focus on the analysis and proving existence of a solution of possibly nonlinear ODEs of second order.
Indeed, most existing results are limited to the second order case, either linear or nonlinear, although some third order results are presented in~\cite{PS2008a}.
Related important developments have focused on solving PDEs, linear and nonlinear, on networks, including linear networks~\cite{SS2015a}.

A wide class of more general nonlocal problems can be shown to be equivalent to multipoint problems, see section~\ref{ssec:Nonlocal} below.
This class includes problems in which one or more boundary conditions is replaced by a nonlocal condition specifying the integral of the solution on a certain subinterval of the spatial domain.
For a heat conduction problem, this may represent conservation of the internal energy on a subinterval of the spatial domain~\cite{CLW1990a}.
For a diffusion problem, the integral may represent the total mass of a certain chemical within a given region, which could be easily measured using a photometer~\cite{CEH1987a}.
A number of other applications for similar second order problems are described in~\cite{Deh2003a, FS1991b}.

\medskip
In this paper, we make use of the Fokas transform (also known in the literature as the unified transform) to give a general solution to multipoint boundary value problems for {\em linear PDEs of arbitrary  order} of the form
\BE
	q_t+a(-i\partial_x)^nq=0,\qquad x\in(0,1),\quad t>0.
	\label{genpde}
\EE
We assume here that $n\geq 2$
(the special case $n=1$ will be considered elsewhere).
The coefficient  $a$ is assumed to  satisfy the restriction (\ref{arestr}) below, which essentially ensures that the Cauchy initial value problem for the PDE is well posed on $t>0$.
The choice of equations with only the term of highest order spatial derivative may seem special.
However, the analysis of these particular PDEs captures the essential features of the solution also for the case of constant coefficient linear PDEs with lower order terms, see for example~\cite{Pel2005a} for a full discussion and justification of this claim.

\medskip
A typical multipoint boundary value problem is the one studied in~\cite{bastys2005}:

\smallskip
{\em Find the function $q(x,t)$, $x\in(0,1)$, $t>0$ such that $q_t=q_{xx}$,  and $q$ satisfies the given (sufficiently smooth) initial condition $ q(x,0)=q_0(x)$ and the additional conditions}
\BE
	q(0,t)=c_0\,q\left(\frac 1 2 ,t\right)+d_0(t)\quad {\rm and}\quad q(1,t)=c_1\,q\left(\frac 1 2 ,t\right)+d_1(t).
	\label{typmbc}
\EE

We give a more comprehensive solution than provided in previous papers to  a more general form of this problem, for a PDE of arbitrary order and  multipoint conditions linking an arbitrary number of interior points $\eta_1,\ldots,\eta_{m-1}\in(0,1)$, with $\eta_0=0$ and $\eta_m=1$.
Our most complete result is theorem~\ref{thm:HeatIMVP} for the heat equation, but the majority of the analysis is carried out in much greater generality.
In particular, we prove the following theorem.

\begin{thm} \label{thm:MainThm}
Suppose $q(x,t)$ is the solution of initial-$(m+1)$ point value problem of order $n$ defined below in~\eqref{eqn:IMVP}.

Then $q(x,t)$ admits the integral representation (see equation~\eqref{eqn:q.implicit})
\begin{multline*}
	2\pi q(x,t) = \int_{-\infty}^{\infty} \re^{i\lambda x-a\lambda^nt}\hat{q}_0(\lambda)\D \lambda
	- \int_{\partial D_R\cap \C^+} \re^{i\lambda x-a\lambda^nt}\sum_{k=0}^{n-1}f_k^{0}(\lambda;T)\D\lambda \\
	- \int_{\partial D_R\cap \C^-} \re^{i\lambda (x-1)-a\lambda^nt}\sum_{k=0}^{n-1}f_k^{m}(\lambda;T)\D\lambda.
\end{multline*}
In the above expression,  $\hat{q}_0$ is defined by~\eqref{hatsdef} and $D_R$ is given by~\eqref{DR}.
The terms involving the functions $f_k^r$ are obtained as the solution of linear system~\eqref{eqn:DtoN.n}.
In the particular case $n=2$ (respectively, $n=3$), the solution of this system is provided by lemma~\ref{lem:DtoN.soln.2} (respectively, lemma~\ref{lem:DtoN.soln.3}).
\end{thm}

The paper is organised as follows.
In section 2, we formulate the general multipoint condition, and we show how a large class of nonlocal conditions can be reformulated as multipoint conditions, so fall within the scope of the present work.
In section 3, we give a concise introduction to the Fokas transform in general, and then in section 4 we apply it to the general multipoint boundary problem formulated in section 2.
By the end of section~4, the first two claims of theorem~\ref{thm:MainThm} are established.
In sections 5 and 6, we study in detail the second and third order case respectively, both in general and for specific examples.
In particular, in section 5 we state and prove theorem~\ref{thm:HeatIMVP}.
The appendices contain proofs of the lemmata that conclude theorem~\ref{thm:MainThm}.

\section{Formulation of the problem}
Let $m,n\in\N$ be independent,
\BE
	0 = \eta_0 < \eta_1 < \eta_2 <\ldots< \eta_m = 1,
\EE
and $$\Msup{b}{k}{j}{r}\in\C\quad \mbox{for } k,j\in\{0,\ldots,n-1\},\;\;r\in\{0,\ldots,m\}.
$$
Consider the \emph{initial-multipoint value problem}
\begin{subequations} \label{eqn:IMVP}
\begin{align} \label{eqn:IMVP:PDE}
	[\partial_t+a(-i\partial_x)^n] q(x,t) &= 0 & (x,t) &\in (0,1)\times(0,T), \\ \label{eqn:IMVP:IC}
	q(x,0) &= q_0(x) & x &\in[0,1], \\ \label{eqn:IMVP:MC}
	\sum_{k=0}^{n-1}\sum_{r=0}^{m}\Msup{b}{k}{j}{r}\partial_x^k q(\eta_r,t) &= g_j(t) & t &\in [0,T], \quad j\in\{0,1,\ldots,n-1\}, 
\end{align}
\end{subequations}
where we assume $g_j \in C^\infty[0,T]$ with $T>0$ a fixed constant, and that the initial datum is compatible with the multipoint data in the sense that
\BE
q_0\in C^n[0,1] \quad{\rm and}\quad  \sum_{k=0}^{n-1}\sum_{r=0}^{m}\Msup{b}{k}{j}{r}\partial_x^k q_0(\eta_r)=g_j(0).
\label{ICcomp}\EE
We always assume that the coefficient $a$ satisfies
\BE
	a\in
	\begin{cases}
		\{e^{i\theta}:\theta\in[0,\pi]\} & \mbox{if } n \mbox{ even,} \\
		\{i,-i\} & \mbox{if } n \mbox{ odd.}
	\end{cases}
	\label{arestr}
\EE

Assuming that a solution $q\in C^n(0,1)$ exists and is unique, we give a representation of this solution by an application of the {\em Fokas transform} approach.
The Fokas transform is an integral transform flexible enough to allow us to derive a general and effective representation of the solution of any such boundary value problem.

The explicit solution representation we derive can be used to justify {\em a posteriori} the existence and uniqueness assumption.

\subsection{Nonlocal boundary conditions} \label{ssec:Nonlocal}
The multipoint boundary condition \eqref{eqn:IMVP:MC} is actually a rather general {\em nonlocal condition}. 
We first illustrate this observation with an example. 

Consider the initial-nonlocal value problem
\begin{subequations} \label{eqn:INVP.Eg}
\begin{align} \label{eqn:INVP.Eg:PDE}
	[\partial_t-\partial_x^2] q(x,t) &= 0 & (x,t) &\in (0,1)\times(0,T), \\ \label{eqn:INVP.Eg:IC}
	q(x,0) &= q_0(x) & x &\in[0,1], \\ \label{eqn:INVP.Eg:NC1}
	\int_0^{\frac{1}{2}}q(x,t)\D x &= 0 & t &\in [0,T], \\ \label{eqn:INVP.Eg:NC2}
	\int_{\frac{1}{2}}^1(1-x)q(x,t)\D x &= 0 & t &\in [0,T],
\end{align}
\end{subequations}
which can be seen as a generalization of a problem studied by Mantzavinos~\cite{FM2013a}.
We claim that problem~\eqref{eqn:INVP.Eg} is equivalent to an \IMVP belonging to class~\eqref{eqn:IMVP}.
To wit, differentiate both nonlocal conditions~\eqref{eqn:INVP.Eg:NC1}--\eqref{eqn:INVP.Eg:NC2} with respect to $t$, and apply~\eqref{eqn:INVP.Eg:PDE}, to obtain
\BE
	\int_0^{\frac{1}{2}}\partial_x^2q(x,t)\D x = 0, \qquad
	\int_{\frac{1}{2}}^1(1-x)\partial_x^2q(x,t)\D x = 0,
\EE
respectively. Integrating (by parts in the latter), one obtains the multipoint conditions
\begin{subequations}
\begin{align}
	&\partial_x q\left(\frac{1}{2},t\right)-\partial_xq(0,t) = 0 & t &\in [0,T], \\
	&\frac{1}{2}\partial_xq\left(\frac{1}{2},t\right)-q(1,t)+q\left(\frac{1}{2},t\right) = 0 & t &\in [0,T].
\end{align}
\end{subequations}

In general, suppose $J\in\{-1,\ldots,n-1\}$, and consider problem~(2.2), but with~(2.2c) replaced by the multipoint and nonlocal conditions
\begin{subequations} \label{eqn:INVP}
\begin{align} \label{eqn:INVP:MC}
	\sum_{k=0}^{n-1}\sum_{r=0}^{m}\Msup{b}{k}{j}{r}\partial_x^k q(\eta_r,t) &= g_j(t) & t &\in [0,T], \quad j\in\{0,1,\ldots,J\}, \\ \label{eqn:INVP:NC}
	\sum_{k=0}^{n-1}\sum_{r=1}^m\int_{\eta_{r-1}}^{\eta_r}\Msup{b}{k}{j}{r}x^kq(x,t)\D x &= g_j(t) & t &\in [0,T], \quad j\in\{J+1,\ldots,n-1\}.
\end{align}
\end{subequations}

\begin{prop} \label{prop:INVP.IMVP.equiv}
	The initial-nonlocal value problem~\eqref{eqn:IMVP:PDE},~\eqref{eqn:IMVP:IC},~\eqref{eqn:INVP},
	with $q_0$ a compatible initial datum in the sense 
	that $q_0 \in C^{n}[0,1]$,
	\begin{subequations}
	\begin{align}
		\hsforall j &\in\{0,\ldots,J\}, & &\sum_{k=0}^{n-1}\sum_{r=0}^{m}\Msup{b}{k}{j}{r}\partial_x^kq_0(\eta_r)=g_j(0), \\ \label{eqn:INVP:Compatibility}
		\hsforall j &\in\{J+1,\ldots,n-1\}, & &\sum_{k=0}^{n-1}\sum_{r=1}^m\int_{\eta_{r-1}}^{\eta_r}\Msup{b}{k}{j}{r}x^kq_0(x)\D x=g_j(0),
	\end{align}
	\end{subequations}
	is equivalent to an \IMVP of the form~\eqref{eqn:IMVP}.
\end{prop}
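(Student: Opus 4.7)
The plan is to generalise the calculation shown in the worked example~\eqref{eqn:INVP.Eg} preceding the proposition. For each $j\in\{J+1,\ldots,n-1\}$, I would differentiate the nonlocal condition~\eqref{eqn:INVP:NC} in $t$ under the integral sign, then substitute the PDE~\eqref{eqn:IMVP:PDE} to replace $\partial_t q(x,t)$ by $-a(-i\partial_x)^n q(x,t)$. This yields
\begin{equation*}
-a\sum_{k=0}^{n-1}\sum_{r=1}^m\Msup{b}{k}{j}{r}\int_{\eta_{r-1}}^{\eta_r}x^k(-i\partial_x)^nq(x,t)\,\D x \;=\; g_j'(t).
\end{equation*}

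Next, I would integrate by parts $n$ times on each of these integrals to transfer all the spatial derivatives from $q$ onto the monomial $x^k$. Since $k\leq n-1<n$, the fully-differentiated interior integrand vanishes identically, and only boundary contributions at $\eta_{r-1}$ and $\eta_r$ survive. Summing over $r\in\{1,\ldots,m\}$, these collapse to a finite linear combination
\begin{equation*}
\sum_{\ell=0}^{n-1}\sum_{r=0}^{m}\widetilde{b}^{\,r}_{\ell,j}\,\partial_x^\ell q(\eta_r,t) \;=\; g_j'(t),
\end{equation*}
which is precisely a multipoint condition of the form~\eqref{eqn:IMVP:MC} with new coefficients $\widetilde{b}^{\,r}_{\ell,j}\in\C$ (explicitly computable from the $\Msup{b}{k}{j}{r}$ and from powers of the $\eta_r$) and new right-hand side $\widetilde{g}_j(t)=g_j'(t)\in C^\infty[0,T]$. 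Adjoining these $n-J-1$ new relations to the $J+1$ conditions~\eqref{eqn:INVP:MC} already in hand supplies a total of $n$ multipoint conditions, hence an \IMVP of the form~\eqref{eqn:IMVP}.

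To obtain equivalence rather than merely an implication, let $N_j(t)$ denote the left-hand side of~\eqref{eqn:INVP:NC} evaluated on a given $q$. The manipulation above is an identity: under the PDE, $N_j'(t)$ equals the multipoint functional just constructed. Thus any solution of the new \IMVP automatically satisfies $N_j'(t)=g_j'(t)$ on $[0,T]$, while the compatibility hypothesis~\eqref{eqn:INVP:Compatibility} supplies the initial value $N_j(0)=g_j(0)$; integrating from $0$ to $t$ recovers the original condition $N_j(t)=g_j(t)$. The converse direction is simply the forward derivation applied to a solution of the original problem.

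The main technical obstacle is the bookkeeping in the integration by parts: each monomial $x^k$ generates $n$ layers of boundary terms at both endpoints of $(\eta_{r-1},\eta_r)$, and on summing over $r$ those attached to an interior node $\eta_s$ (with $1\leq s\leq m-1$) appear twice---once from the interval on its left and once from the interval on its right. Because the target coefficients $\widetilde{b}^{\,r}_{\ell,j}$ are unconstrained complex numbers, no cancellation is required, and the assumed regularity $q(\cdot,t)\in C^n(0,1)$ already ensures that $\partial_x^\ell q(\eta_s,t)$ is unambiguously defined; the combined coefficient at each interior node is therefore just the algebraic sum of its two contributions. This step is straightforward but notationally heavy, and it is where most of the writing would go.
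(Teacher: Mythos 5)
Your proposal is correct and follows essentially the same route as the paper: differentiate the nonlocal condition in $t$, substitute the PDE, and integrate by parts until the polynomial factor $x^k$ is annihilated, leaving only boundary contributions at the nodes $\eta_r$, which assemble into a multipoint condition with right-hand side proportional to $g_j'(t)$. Your explicit treatment of the converse direction---recovering $N_j(t)=g_j(t)$ by integrating $N_j'(t)=g_j'(t)$ from the compatibility value $N_j(0)=g_j(0)$---is a point the paper leaves implicit, and is a worthwhile addition.
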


\begin{proof}
	We show first that each of the nonlocal conditions implies a multipoint condition.
	For each $j\in\{J+1,\ldots,n-1\}$, differentiating nonlocal condition~\eqref{eqn:INVP:NC} with respect to $t$, and applying the PDE~\eqref{eqn:IMVP:PDE} yields
	\BE \label{eqn:INVP.IMVP.equiv.proof.1}
		\sum_{k=0}^{n-1}\sum_{r=1}^m \int_{\eta_{r-1}}^{\eta_r}\Msup{b}{k}{j}{r}x^k\partial_x^nq(x,t)\D x = \frac{i^n}{a}\frac{\D}{\D t}g_j(t).
	\EE
	Integrating by parts $k+1$ times, the left hand side is equal to
	\BES
		\sum_{k=0}^{n-1}\sum_{r=1}^m \sum_{p=0}^k (-1)^p \Msup{b}{k}{j}{r}\left[ \eta_r^{k-p}\partial_x^{n-p-1}q(\eta_r,t) - \eta_{r-1}^{k-p}\partial_x^{n-p-1}q(\eta_{r-1},t) \right] \\
		= \sum_{k=0}^{n-1}\sum_{r=0}^m \sum_{p=0}^k (-1)^p \Msups{\tilde{b}}{k}{j}{r}{p} \partial_x^{n-p-1}q(\eta_r,t),
	\EES
	where
	\BES
		\Msups{\tilde{b}}{k}{j}{r}{p} = \eta_r^{k-p}\left( \Msup{b}{k}{j}{r}-\Msup{b}{k}{j}{r+1} \right),
	\EES
	defining $\Msup{b}{k}{j}{m+1}=0=\Msup{b}{k}{j}{0}$ for notational convenience.
	Finally, we rewrite the operator
	\BES
		\sum_{k=0}^{n-1}\sum_{p=0}^k (-1)^p \Msups{\tilde{b}}{k}{j}{r}{p} \partial_x^{n-p-1} \mbox{ in the form } \sum_{k=0}^{n-1}\Msup{\hat{b}}{k}{j}{r} \partial_x^k
	\EES
	by defining
	\BES
		\Msup{\hat{b}}{k}{j}{r} = \sum_{\ell=n-1-k}^{n-1} (-1)^{n-1-\ell} \eta_r^{k+\ell+1-n}\left( \Msup{b}{k}{j}{r}-\Msup{b}{k}{j}{r+1} \right).
	\EES
	We have shown that each nonlocal condition~\eqref{eqn:INVP:NC} implies a corresponding multipoint condition
	\begin{align*}
		\sum_{k=0}^{n-1}\sum_{r=0}^{m}\Msup{\hat{b}}{k}{j}{r}\partial_x^k q(\eta_r,t) &= \frac{i^n}{a}\frac{\D}{\D t} g_j(t) & t &\in [0,T], \quad j\in\{J+1,\ldots,n-1\}.
	\end{align*}
	Evaluating at $t=0$ implies compatibility of $q_0$ in the sense of equation~\eqref{ICcomp} with $\hat{b}$ replacing $b$.
	
	The converse argument, beginning at
	\BES
		\sum_{k=0}^{n-1}\sum_{r=0}^{m}\Msup{\hat{b}}{k}{j}{r}\partial_x^k q(\eta_r,t) = \frac{i^n}{a} \gamma_j(t),
	\EES
	allows us to rewrite the left side as the left side of equation~\eqref{eqn:INVP.IMVP.equiv.proof.1}.
	Applying the PDE to the integrand, integrating in time from $0$ to $t$ implies
	\BES
		\sum_{k=0}^{n-1}\sum_{r=1}^m\int_{\eta_{r-1}}^{\eta_r}\Msup{b}{k}{j}{r}x^kq(x,t)\D x - \Gamma_j(t) = \sum_{k=0}^{n-1}\sum_{r=1}^m\int_{\eta_{r-1}}^{\eta_r}\Msup{b}{k}{j}{r}x^kq_0(x)\D x - \Gamma_j(0),
	\EES
	for any choice of antiderivative $\Gamma_j$ of $\gamma_j$.
	Selecting the particular $\Gamma_j$ for which the right hand side evaluates to $0$ (equivalently, compatibility condition~\eqref{eqn:INVP:Compatibility} holds) yields nonlocal condition~\eqref{eqn:INVP:NC}.
\end{proof}

It is clear from the proof that, in place of one or more nonlocal conditions of the form~\eqref{eqn:INVP:NC}, one may specify nonlocal conditions of the form
\BE
	\sum_{k=0}^{n-1}\sum_{r=1}^m\int_{\eta_{r-1}}^{\eta_r}\Msup{b}{k}{j}{r}\partial_x^kq(x,t)\D x = g_j(t) \qquad t \in [0,T].
\EE

\section{The Fokas transform}
The Fokas transform is an integral transform method for solving linear and integrable nonlinear PDE with constant coefficients.
Originally motivated by the quest to extend the inverse scattering transform to the case of boundary value problems (see~\cite{Pel2015a}), this method has evolved into a powerful and more general methodology for deriving an effective integral representation for a variety of linear boundary value problems in two variables. 

Around the turn of the century, the Fokas transform method was developed for linear half-line (one point) and finite interval (two point) initial-boundary value problems~\cite{FG1997a,Fok2001a,FP2001a,Pel2004a,Fok2008a,Smi2012a,fokas2014unified}.
An important recent advance is the generalization of the method to interface problems on a variety of domains~\cite{DPS2014a,APSS2015a,SS2015a,DS2015a,DSS2016a}.

The application of this methodology for a linear PDE of the form~\eqref{genpde} always yields an integral representation over a complex contour, and a relation linking all initial and boundary values, called in the literature the {\em global relation}.
The heart of the solution procedure is the exploitation of the global relation to characterise the representation in terms of only the given data---the resulting mapping is called the {\em generalised Dirichlet to Neumann map}. 

Below, we summarise the ingredients of the method in general~\cite{Fok2008a}, and then turn to the class of problems considered in this paper and derive the associated Dirichlet to Neumann map.

\subsection{Formal solution representation via Green's Theorem}

We consider the PDE~\eqref{genpde} for  $(x,t)\in \Omega=(0,1)\times(0,T)$, where
$T$ denotes a fixed positive constant, and $a$ satisfies the constraint (\ref{arestr}).

Let
\BE
A(x,t,\la)=\re^{-i\la x+a\la^n t}q(x,t),\qquad B(x,t,\la)=\re^{-i\la x+a\la^n t}\sum_{k=0}^{n-1}c_k(\la)\partial_x^kq(x,t),
\label{ABdef}
\EE
where the coefficient polynomials $c_k(\la)$ are defined by the identity 
\BES
	\sum_{k=0}^{n-1}c_k(\la)\partial_x^k = ia\left.\frac{\la^n-\ell^n}{\la-\ell}\right|_{\ell=-i\partial_x}.
\EES
The PDE~\eqref{genpde} can be written in the divergence form
\BES
	A_t-B_x=0.
\EES
Using the two-dimensional Green's theorem, we obtain
\BE
	\int_{\partial \tilde\Omega}[A \D x+B \D t]=0,
	\label{grorig}
\EE
where $\partial \tilde\Omega$ denotes the oriented boundary of any simply connected domain $\tilde\Omega\subset\Omega$.
For $\tilde\Omega=(0,1)\times(0,t)$, this equation yields
\BE
	-\int_0^{1}A(x,0,\la) \D x+\int_0^t B(0,s,\la) \D s-\int_0^t B(1,s,\la) \D s+\int_{0}^1 A(x,t,\la) \D x=0.
	\label{gr0}
\EE
Using (\ref{ABdef}), we write this expression as
\begin{multline} \label{gr1}
	\int_0^1\re^{-i\la x}q(x,0) \D x-\re^{a\la^nt}\int_{0}^1 \re^{-i\la x}q(x,t) \D x \\
	= \int_0^t\re^{a\la^ns}\sum_{k=0}^{n-1}c_k(\la)\partial_x^kq(0,s) \D s
		- \int_0^t\re^{-i\la+a\la^ns}\sum_{k=0}^{n-1}c_k(\la)\partial_x^kq(1,s) \D s,
		\qquad  t>0.
\end{multline}

We use the notation
\BE \label{hatsdef}
	\hat q_0(\la)=\int_0^{1}\re^{-i\la x}q(x,0) \D x,\qquad \hat q(t,\la)=\int_{0}^1 \re^{-i\la x}q(x,t) \D x,\qquad 0<t<T.
\EE
We assume that $q(x,0)$ and the boundary values $\partial_x^kq(0,t)$,  $\partial_x^kq(1,t)$ are sufficiently regular functions, and that they are compatible at the corners of $\Omega$.

Inverting the Fourier transform in (\ref{gr1}) for $q(x,t)$, we obtain the implicit representation
\begin{multline} \label{formalrep}
	q(x,t)=\frac 1 {2\pi}\int_{-\infty}^{\infty}\re^{i\la x-a\la^nt}\left[\hat q_0(\la)-\int_0^t\re^{a\la^ns}\sum_{k=0}^{n-1}c_k(\la)\partial_x^kq(0,s)] \D s \right. \\
	\left. {}+ \re^{-i\la}\int_0^t\re^{a\la^ns}\sum_{k=0}^{n-1}c_k(\la)\partial_x^kq(1,s) \D s\right] \D \la,
\end{multline}
valid for $(x,t)\in\Omega$.

Defining $\C^\pm=\{\lambda\in\C:\pm\Im(\lambda)>0\}$, and the domain 
\BE
D_R= \{\lambda\in\C:|\lambda|>R,\;\Re(a\lambda^n)<0\},
	\label{DR}
\EE
we note that 
\begin{itemize}
\item
$\re^{i\la x}$, with  $x\in[0,1]$,  is analytic and bounded for $\la\in \overline{\C^+}$;
\item
$\re^{i\la (x-1)}$, with  $x\in[0,1]$,  is analytic and bounded for $\la\in \overline{\C^-}$;
\item
$\re^{a\lambda^n t}$, with $t>0$,  is analytic and bounded for $\la\in \overline{D_R}$ for any $R>0$.
\end{itemize}
A straightforward application of Cauchy's theorem and Jordan's lemma~\cite{AF1997a} allows us to deform contours and write~\eqref{formalrep} as
\begin{multline} \label{formalrepc1}
	q(x,t)=\frac 1 {2\pi}\left[\int_{-\infty}^{\infty}\re^{i\la x-a\la^nt}\hat q_0(\la) \D \la - \int_{\partial D_R^+}\re^{i\la x-a\la^nt}\int_0^t\re^{a\la^ns}\sum_{k=0}^{n-1}c_k(\la)\partial_x^kq(0,s) \D s \D \la \right. \\
	- \left. \int_{\partial D_R^-}\re^{i\la(x-1)-a\la^nt}\int_0^t\re^{a\la^ns}\sum_{k=0}^{n-1}c_k(\la)\partial_x^kq(1,s) \D s \D \la\right],
\end{multline}
with
\BE
	D_R^\pm = D_R\cap \C^\pm.
	\label{DRspm}
\EE
Finally, for $\eta=0,1$ and $\tau\in[t,T]$, analyticity and boundedness of
\BES
	\int_t^\tau\re^{a\la^ns}\sum_{k=0}^{n-1}c_k(\la)\partial_x^kq(\eta,s) \D s
\EES
for $\lambda\in \overline{D_R}$ permits us to extend the limits of the inner integrals from $(0,t)$ to $(0,\tau)$, obtaining
\begin{multline} \label{formalrepc}
	q(x,t)=\frac 1 {2\pi}\left[\int_{-\infty}^{\infty}\re^{i\la x-a\la^nt}\hat q_0(\la) \D \la - \int_{\partial D_R^+}\re^{i\la x-a\la^nt}\int_0^\tau\re^{a\la^ns}\sum_{k=0}^{n-1}c_k(\la)\partial_x^kq(0,s) \D s \D \la \right. \\
	- \left. \int_{\partial D_R^-}\re^{i\la(x-1)-a\la^nt}\int_0^\tau\re^{a\la^ns}\sum_{k=0}^{n-1}c_k(\la)\partial_x^kq(1,s) \D s \D \la\right].
\end{multline}

\subsection{The global relation}
Equation~\eqref{grorig} can be viewed either as an implicit representation of the solution (as we derived above), or as the starting point for determining the unknown boundary values.
To illustrate the latter, let $\tilde\Omega=(\zeta,\eta)\times (0,\tau)$ where $0\leq \zeta<\eta\leq 1$.
Then write equation (\ref{grorig}) in the form of the following {\em global relation}:
\begin{multline} \label{GRab}
	\re^{-i\lambda\zeta}\int_0^\tau\re^{a\la^ns}\sum_{k=0}^{n-1}c_k(\la)\partial_x^kq(\zeta,s) \D s - \re^{-i\lambda\eta}\int_0^\tau\re^{a\la^ns}\sum_{k=0}^{n-1}c_k(\la)\partial_x^kq(\eta,s) \D s \\
	= \int_\zeta^{\eta}\re^{-i\la x}q(x,0) \D x-\re^{a\la^n\tau}\int_\zeta^{\eta}\re^{-i\la x}q(x,\tau) \D x.
\end{multline}
The particular global relation depends on the specific choice of the domain $\tilde\Omega$, but it is important to stress that we view this as a relation between the various boundary values of the solution.
This point of view is justified a posteriori by the general property that, because of their specific analyticity properties, the terms involving the unknown solution values at time $\tau$ (i.e.\ terms involving $q(x,\tau)$) will not contribute to the final solution representation. 

\begin{rmk} \label{rmk:GR.AlternateDerivation}
	An alternate derivation of the above identities, which more closely follows the classical Fourier transform method for linear evolution equations on the full line, proceeds as follows.
	Restricting $q$ to the spatial interval $(\zeta,\eta)$ and applying the Fourier transform to partial differential equation~\eqref{eqn:IMVP:PDE} yields
	\BES
		\frac{\D}{\D t} \int_\zeta^\eta \re^{-i\lambda x} q(x,t) \D x = \int_\zeta^\eta \re^{-i\lambda x} \partial_{xx} q(x,t) \D x.
	\EES
	Integrating by parts twice on the left hand side produces certain boundary terms and the spatial Fourier transform of the restricted $q$.
	Solving the resulting ODE for the spatial Fourier transform of the restricted $q$ on temporal interval $(0,t)$, we obtain equation~\eqref{grorig} without having to apply Green's theorem.
	The solution representation~\eqref{formalrepc} and global relation~\eqref{GRab} follow as above.
\end{rmk}

\section{The implementation of the Fokas transform method for multipoint value problems}
\subsubsection*{Notation}
For $\lambda\in\C$ and $k\in\{0,\ldots,n-1\}$, we denote a primitive $n$\Th root of unity
\begin{align}
	\alpha &= \re^{2\pi i/n}, \\ \intertext{an exponential function}
	E_r(\lambda) &= \re^{-i\lambda\eta_r}, & r &\in\{0,\ldots,m\}, \\ \intertext{the Fourier transform of the initial datum, restricted to $(\eta_{r-1},\eta_r)$}
	\hat{q}_0^r(\lambda) &= \int_{\eta_{r-1}}^{\eta_r} \re^{-i\lambda x}q_0(x) \D x, & r &\in\{1,\ldots,m\}, \\ \intertext{the Fourier transform of the solution at time $\tau$, restricted to $(\eta_{r-1},\eta_r)$} \label{eqn:q.tau.hat} 
	\hat{q}_\tau^r(\lambda) &= \int_{\eta_{r-1}}^{\eta_r} \re^{-i\lambda x} q(x,\tau) \D x, & r &\in\{1,\ldots,m\}, \\ \intertext{a time transform of the value of $\partial_x^k q$ at $x=\eta_r$}
	f_k^r(\lambda) = f_k^r(\lambda;\tau) &= c_k(\lambda)\int_0^\tau \re^{a\lambda^ns}\partial_x^kq(\eta_r,s)\D s, & r &\in\{0,\ldots,m\},\;\; \tau \in [0,T].
\end{align}
For convenience of notation, we usually suppress the explicit $\tau$-dependence of $f_k^r$.

\subsubsection*{The implicit integral representation of the solution}
By implementing the steps of the Fokas transform method outlined in the previous section, we find that the solution $q(x,t)$ can be represented as 
\BE \label{eqn:q.implicit}
	2\pi q(x,t) = \int_{-\infty}^{\infty} \re^{i\lambda x-a\lambda^nt}\hat{q}_0(\lambda)\D \lambda - \int_{\partial D_R^+} \re^{i\lambda x-a\lambda^nt}\sum_{k=0}^{n-1}f_k^{0}(\lambda;\tau)\D\lambda 
	- \int_{\partial D_R^-} \re^{i\lambda (x-1)-a\lambda^nt}\sum_{k=0}^{n-1}f_k^{m}(\lambda;\tau)\D\lambda,
\EE
for $x\in(0,1)$, $t\in(0,T)$, and this representation holds for any choice of $\tau\in(t,T]$ and any $R\geq0$ (see equation~\eqref{formalrepc}).

This representation depends on the Fourier transform of the initial datum $q_0(x)$ and on transforms of the boundary values (and their derivatives)  at $x=0$ and $x=1$, namely $\partial_x^kq(0,\tau)$ and $\partial_x^kq(1,\tau)$, $k\in\{0,\ldots,n-1\}$, which are not explicitly known.
Hence this is a \emph{implicit} representation of the solution. The main question is how to characterise these unknown functions in terms of the known data of the problem. 

\subsection{The global relation}

Using the above notation, we consider the global relation in each of the rectangles $(x,t)\in\tilde\Omega_r=[\eta_{r-1},\eta_r]\times [0,\tau]$, $r\in\{1,\ldots,m\}$, $\lambda\in\C$.
This yields a set of $m$  \emph{global relations}.
By evaluating each relation at $\lambda,\alpha\lambda,\ldots,\alpha^{n-1}\lambda$, and using the fact that $f_k^r(\alpha\lambda) = \alpha^{n-1-k}f_k^r(\lambda)$, we obtain the following system of $mn$ equations:
\BE \label{eqn:GR.general}
	\sum_{k=0}^{n-1}\alpha^{(n-1-k)p}\left[ E_{r-1}(\alpha^p\lambda)f_k^{r-1}(\lambda) - E_{r}(\alpha^p\lambda)f_k^{r}(\lambda) \right] = \hat{q}_0^r(\alpha^p\lambda) - \re^{a\lambda^n\tau}\hat{q}_\tau^r(\alpha^p\lambda).
\EE
Explicitly, for each $p\in\{0,1,\ldots,n-1\}$ we have the following system of $m$ equations:
\begin{eqnarray*}
&&\sum_{k=0}^{n-1}\alpha^{(n-1-k)p}\left[ f_k^{0}(\lambda) - E_{1}(\alpha^p\lambda)f_k^{1}(\lambda) \right] 
= \hat{q}_0^1(\alpha^p\lambda) - \re^{a\lambda^n\tau}\hat{q}_\tau^1(\alpha^p\lambda)\\
&&\sum_{k=0}^{n-1}\alpha^{(n-1-k)p}\left[ E_{1}(\alpha^p\lambda)f_k^{1}(\lambda) - E_{2}(\alpha^p\lambda)f_k^{2}(\lambda) \right] 
=\hat{q}_0^2(\alpha^p\lambda) - \re^{a\lambda^n\tau}\hat{q}_\tau^2(\alpha^p\lambda)\\
&&   \qquad \vdots
\\
&&\sum_{k=0}^{n-1}\alpha^{(n-1-k)p}\left[ E_{m-1}(\alpha^p\lambda)f_k^{m-1}(\lambda) - E_m(\alpha^p\lambda)f_k^{m}(\lambda) \right] 
= \hat{q}_0^m(\alpha^p\lambda) - \re^{a\lambda^n\tau}\hat{q}_\tau^m(\alpha^p\lambda).
\end{eqnarray*}

For the moment, we ignore the terms involving the functions $\hat{q}_\tau^r$; indeed, as we mentioned already, they will not contribute to the solution representation we eventually derive.
We then  have a system of $mn$ equations for the $(m+1)n$ unknown functions $f_k^{r}$, $k\in\{0,\ldots,n-1\}$, $r\in\{0,\ldots,m\}$.
Using the data of the problem, namely the multipoint conditions~\eqref{eqn:IMVP:MC}, the number of equations increases to  $(m+1)n$, the same as the number of unknowns.  

In the next section, we explicitly formulate this $(m+1)n$ dimensional system. 

\subsection{Formulation of the generalised Dirichlet-to-Neumann map}

Applying the time transform to the multipoint conditions (\ref{eqn:IMVP:MC}), we obtain, for $\tau\in[0,T]$ and $j\in\{0,\ldots,n-1\}$,
\BE \label{eqn:MC.transformed}
	\sum_{k=0}^{n-1}\sum_{r=0}^m \Msup{b}{k}{j}{r} \frac{(-a)}{i^nc_k(\lambda)} f_k^r(\lambda;\tau) =  \frac{-a}{i^n}\int_0^\tau \re^{a\lambda^ns} g_j(s) \D s.
\EE
The coefficient $(-a/i^n)$ has the property that $(-a/i^nc_{n-1}(\lambda))=1$, and is included to simplify sightly some of the expressions below.
Combining these $n$ relations with the set of $nm$ global relations we have a system of $(m+1)n$ equations, involving the $(m+1)n$ unknowns $f_k^r(\lambda;\tau)$, $r\in\{0,\ldots,m\}$, $k\in\{0,\ldots,n-1\}$.

\medskip
Set 
\BE
h_j(\lambda) = h_j(\lambda;\tau) := \frac{a}{-i^n}\int_0^\tau \re^{a\lambda^ns} g_j(s) \D s.
\label{hjknown}\EE
The functions $h_j(\lambda)$ are the transforms of the known data of the problem.
The unknowns are collected in the $(m+1)n$-dimensional vector $F$ given by
$$
F=F(\lambda)=(f_0^0(\lambda),\ldots,f_{n-1}^0(\lambda), f_0^1(\lambda),\ldots,f_{n-1}^1(\lambda),\ldots,f_0^m(\lambda),\ldots,f_{n-1}^m(\lambda)).
$$
In an effort to simplify notation we often suppress the $\lambda$-dependence of $F$ and related objects.
For such vectors,  we use the following notational convention:
\BE
F=\Big(\overbrace{f_0^r(\lambda),\ldots,f_{n-1}^r(\lambda)}^{r=0,1,\ldots,m}\Big)
\label{Fordering}\EE
In terms of these functions, 
equations~\eqref{eqn:GR.general} and~\eqref{eqn:MC.transformed}, can be expressed as the linear system
\begin{subequations}
\begin{multline}
F\mathcal{B}
	= \Big(h_0(\lambda),\ldots,h_{n-1}(\lambda),\overbrace{-\hat{q}_0^r(\lambda),-\hat{q}_0^r(\alpha\lambda),\ldots,-\hat{q}_0^r(\alpha^{n-1}\lambda)}^{r=1,2,\ldots,m}\Big) \\
	 + e^{a\lambda^n\tau}
	 \Big(0,\ldots,0,\overbrace{\hat{q}_\tau^r(\lambda),\hat{q}_\tau^r(\alpha\lambda),\ldots,\hat{q}_\tau^r(\alpha^{n-1}\lambda)}^{r=1,2,\ldots,m}\Big),
\end{multline}
where 
the vectors on the right hand side are also $(m+1)n$-dimensional, and are written by convention following the same ordering as $F$ given in~\eqref{Fordering}.
The $(m+1)n\times (m+1)n$ matrix $\mathcal{B}$ is defined  by
\begin{align}
	\mathcal{B} &=
	\BP
		\mathfrak{b}^0 & -e_0 &    0 & \cdots &   0     &   0 \\
		\mathfrak{b}^1 &  e_1 & -e_1 & \cdots &   0     &   0 \\
		\mathfrak{b}^2 &    0 &  e_2 & \cdots &   0     &   0 \\
		\vdots & \vdots & \vdots & \ddots & \vdots & \vdots \\
		\mathfrak{b}^{m-1}& 0 &    0 & \cdots & e_{m-1} & -e_{m-1} \\
		\mathfrak{b}^m &    0 &    0 & \cdots &   0     &  e_m
	\EP,
 \qquad \mbox{with} \\
	\mathfrak{b}^r &=
	\BP
		\Msup{b}{0}{0}{r}\frac{1}{(i\lambda)^{n-1}} & \Msup{b}{0}{1}{r}\frac{1}{(i\lambda)^{n-1}} & \cdots & \Msup{b}{0}{n-1}{r}\frac{1}{(i\lambda)^{n-1}} \\
		\Msup{b}{1}{0}{r}\frac{1}{(i\lambda)^{n-2}} & \Msup{b}{1}{1}{r}\frac{1}{(i\lambda)^{n-2}} & \cdots & \Msup{b}{1}{n-1}{r}\frac{1}{(i\lambda)^{n-2}} \\
		\vdots & \vdots & \ddots & \vdots \\
		\Msup{b}{n-1}{0}{r} & \Msup{b}{n-1}{1}{r} & \cdots & \Msup{b}{n-1}{n-1}{r} \\
	\EP, \qquad n\times n \mbox{ block;} \\
	e_r &=
	\BP
		E_r(\lambda) & E_r(\alpha\lambda) \alpha^{n-1} & \cdots & E_r(\alpha^{(n-1)}\lambda) \alpha^{(n-1)(n-1)} \\
		E_r(\lambda) & E_r(\alpha\lambda) \alpha^{n-2} & \cdots & E_r(\alpha^{(n-1)}\lambda) \alpha^{(n-1)(n-2)} \\
		\vdots & \vdots & \ddots & \vdots \\
		E_r(\lambda) & E_r(\alpha\lambda) & \cdots & E_r(\alpha^{(n-1)}\lambda)\alpha^{(n-1)} \\
	\EP, \qquad n\times n \mbox{ block.}
\end{align}
\end{subequations}

Exploiting the Vandermonde-like structure of $e_r$, we rewrite the system as
\begin{subequations} \label{eqn:DtoN.n}
\begin{multline}
	\biggg(\overbrace{E_r(\lambda)\sum_{k=0}^{n-1}f_{k}^r(\lambda),E_r(\alpha\lambda)\sum_{k=0}^{n-1}\alpha^{n-1-k}f_{k}^r(\lambda),\ldots,E_r(\alpha^{n-1}\lambda)\sum_{k=0}^{n-1}\alpha^{(n-1)(n-1-k)}f_{k}^r(\lambda)}^{r=0,1,\ldots,m}\biggg)\mathcal{A} \\
	= \Big(h_0(\lambda),\ldots,h_{n-1}(\lambda),\overbrace{-\hat{q}_0^r(\lambda),-\hat{q}_0^r(\alpha\lambda),\ldots,-\hat{q}_0^r(\alpha^{n-1}\lambda)}^{r=1,2,\ldots,m}\Big) \\
	 + e^{a\lambda^n\tau}
	 \Big(0,\ldots,0,\overbrace{\hat{q}_\tau^r(\lambda),\hat{q}_\tau^r(\alpha\lambda),\ldots,\hat{q}_\tau^r(\alpha^{n-1}\lambda)}^{r=1,2,\ldots,m}\Big),
\end{multline}
where
\BE
	\mathcal{A} =
	\BP
		\beta^0     & -I &  0 & \cdots & 0 & 0 \\
		\beta^1     &  I & -I & \cdots & 0 & 0 \\
		\beta^2     &  0 &  I & \cdots & 0 & 0 \\
		\vdots & \vdots & \vdots & \ddots & \vdots & \vdots \\
		\beta^{m-1} &  0 &  0 & \cdots & I & -I \\
		\beta^m     &  0 &  0 & \cdots & 0 & I
	\EP,
\EE
that is $\mathcal{A}$ is an $(m+1)\times(m+1)$ block matrix, with each block being an $n\times n$ matrix.
The block $I$ is the $n\times n$ identity matrix and the block $\beta^r$ is defined by
\BE
	\beta^r =
	\BP
		\frac{1}{n}E_r(-\lambda)\sum_{j=0}^{n-1}\Msup{b}{j}{0}{r}\frac{1}{(i\lambda)^{n-1-j}} &
		\cdots &
				\frac{1}{n}E_r(-\lambda)\sum_{j=0}^{n-1}\Msup{b}{j}{n-1}{r}\frac{1}{(i\lambda)^{n-1-j}} 
		\\
		\frac{1}{n}E_r(-\alpha\lambda)\sum_{j=0}^{n-1}\alpha^{j+1}\Msup{b}{j}{0}{r}\frac{1}{(i\lambda)^{n-1-j}}&
		\cdots &
				\frac{1}{n}E_r(-\alpha\lambda)\sum_{j=0}^{n-1}\alpha^{j+1}\Msup{b}{j}{n-1}{r}\frac{1}{(i\lambda)^{n-1-j}}
		 \\
		\vdots &  & \vdots \\
		\frac{1}{n}E_r(-\alpha^{n-1}\lambda)\sum_{j=0}^{n-1}\alpha^{(n-1)(j+1)}\Msup{b}{j}{0}{r}\frac{1}{(i\lambda)^{n-1-j}}&
		\cdots &
				\frac{1}{n}E_r(-\alpha^{n-1}\lambda)\sum_{j=0}^{n-1}\alpha^{(n-1)(j+1)}\Msup{b}{j}{n-1}{r}\frac{1}{(i\lambda)^{n-1-j}}
		 \\
	\EP.
\EE
\end{subequations}

This system, in addition to being simpler, has the convenient property that the quantities which must be substituted into equation~\eqref{eqn:q.implicit},
\BE
	\sum_{k=0}^{n-1}f_{k}^0(\lambda) \qquad\mbox{and}\qquad E_m(\lambda)\sum_{k=0}^{n-1}f_{k}^m(\lambda) ={\rm e}^{-i\lambda}\sum_{k=0}^{n-1}f_{k}^m(\lambda),
\EE
are precisely the $1$\st and $(mn+1)$\st unknown quantities.
These two unknowns are the only ones for which we need explicit expressions.

\subsection{Explicit expression for the generalised Dirichlet-to-Neumann map}
To obtain the Dirichlet-to-Neumann map, we must solve the system  (\ref{eqn:DtoN.n}).
However, any solution of this linear system, i.e.\ any expression for $f_k^r(\lambda)$ obtained by solving it (assuming  the system is uniquely solvable), must necessarily depend also upon the \emph{unknown} functions $\hat{q}_\tau^r(\lambda)$, which are the Fourier transform of the solution at the time $t=\tau$.

An important feature of the Fokas transform approach is that the contribution of such terms can usually be proved to vanish.
Indeed, for well-posed boundary value problems for the PDE \eqref{genpde}, it can be proved  that any term involving the unknown functions $\hat{q}_\tau^r(\lambda)$ is bounded and analytic inside the specific contour along which the term is integrated.
Therefore these terms \emph{do not contribute to the solution representation}.
Indeed, the condition that the contribution of these terms can be eliminated is precisely the condition characterizing the class of boundary conditions that yield a well posed problem~\cite{Pel2004a}.

\medskip
It is crucial for our purposes that the same property hold in the case of multipoint boundary value problems. Indeed, we need to establish the following results:
\begin{enumerate}
\item[(a)]
Characterise the class of multipoint boundary conditions that yield a solution of the system (\ref{eqn:DtoN.n}) with analyticity properties that imply that the contribution of any term involving the unknown functions $\hat{q}_\tau^r(\lambda)$ is bounded and analytic inside $D_R^\pm$.
\item[(b)]
Solve the system explicitly for the conditions as in part (a). 
\end{enumerate}

We will not give the full characterisation in part~(a), but rather assume that the multipoint conditions we have are \emph{admissible} in the sense of~\cite{FP2001a,Pel2004a}, i.e.\ they yield a well posed problem which admits a unique solution.
We will however present a discussion and some specific criteria for well-posedness. 

\smallskip
Part~(b) is theoretically straightforward, as the solution of the linear system is simply given by an application of Cramer's rule.
However, deriving an explicit formula via Cramer's rule is not straightforward, due to the size and complexity of the matrix $\mathcal{A}$.

If $m=1$, then the Dirichlet-to-Neumann map~\eqref{eqn:DtoN.n} is $2n\times2n$, but a simpler $n\times n$ formulation has been found by exploiting the adjoint boundary conditions~\cite{FS2016a}, or by directly reducing the $2n\times2n$ system~\cite{Smi2012a}.
It is expected that similar approaches may be applied for $m>1$, but it is not necessary to do so in order to achieve~(b) in some generality.

\smallskip
We must mention here another important issue.
The determinant $\det\mathcal{A}=\Delta(\la)$ is in general an exponential polynomial function of the complex parameter $\la$, therefore it will have countably many zeros $\la_j\in\C$.
The location of these zeros depends on the particular multipoint conditions, but can be estimated asymptotically using general results in complex analysis~\cite{Lan1931a}.
In certain cases, for example when the operator is self-adjoint, it is possible to deform the contour integral solution representation~\eqref{eqn:q.implicit} onto small circular contours about these zeros of $\Delta$, and, via a residue calculation, obtain a series representation of the solution to the initial-multipoint value problem.
However, we emphasize that, even for $m=1$, it is known that it is not always possible to obtain such a series representation~\cite{FS2016a, Pel2005a}.
We leave the study of the criteria that guarantee the existence of such alternative series solutions to a subsequent paper.

\smallskip
In what follows we analyse this system for the case of second and third order, i.e.\ $n=2$ or $3$, and derive explicit formulae for the solution.
The second order case appears most commonly in the literature, and has direct applications \cite{bastys2005}.
We include consideration of the third order case as the solution generally has a very different behaviour.
Heuristically, this is due to the effect of  the boundary conditions destroying the self-adjoint structure of the spatial operator, as discussed in~\cite{PS2013a}.
Indeed, for third order problems, the operator may be degenerate irregular (in the sense of~\cite{Loc2000a,Loc2008a}), yet yield well-posed problems.
The spectral theory associated with such problems is strikingly different to that for Birkhoff-regular problems~\cite{FS2016a}.
Such degenerate-irregular well-posed problems do not occur for $n=2$.

In parallel with the situation for two-point initial-boundary value problems, we expect that $n=2$ and $n=3$ are typical of even and odd order multipoint problems, and the higher order cases add technical challenges but no new mathematical properties.

\section{The case $n=2$: PDEs of second order}

In this section, we solve system~\eqref{eqn:DtoN.n} explicitly.
Exploiting the linearity, we can separate the contributions to the solution of the terms $h_j$, $\hat{q}_0^r$, and $\hat{q}_\tau^r$.
This is particularly convenient for the practical purpose of obtaining an effective integral representation from equation~\eqref{formalrep}.
Indeed, we will show that the terms involving $\hat{q}_\tau^r$ do not contribute to the solution representation. 

\subsection{The Dirichlet to Neumann map}

In the case $n=2$, the linear system~\eqref{eqn:DtoN.n} has dimension $2(m+1)$ and may be expressed as
\begin{subequations} \label{eqn:DtoN.2}
\begin{multline} \label{eqn:DtoN.2.system}
	\Big(\overbrace{E_r(\lambda)[f_{0}^r(\lambda)+f_1^r(\lambda)],E_r(-\lambda)[-f_{0}^r(\lambda)+f_1^r(\lambda)]}^{r=0,1,\ldots,m}\Big)\mathcal{A} \\
	= \Big(h_0(\lambda),h_1(\lambda),\overbrace{-\hat{q}_0^r(\lambda),-\hat{q}_0^r(-\lambda)}^{r=1,2,\ldots,m}\Big) + \re^{a\lambda^n\tau} \Big(0,0,\overbrace{\hat{q}_\tau^r(\lambda),\hat{q}_\tau^r(-\lambda)}^{r=1,2,\ldots,m}\Big),
\end{multline}
where
\BE \label{eqn:DtoN.2.A}
	\mathcal{A} =
	\BP
		\Msup{B}{0}{0}{0} & \Msup{B}{0}{1}{0} & -1 &  0 &  0 &  0 & \cdots & 0 & 0 & 0 & 0 \\
		\Msup{B}{1}{0}{0} & \Msup{B}{1}{1}{0} &  0 & -1 &  0 &  0 & \cdots & 0 & 0 & 0 & 0 \\
		\Msup{B}{0}{0}{1} & \Msup{B}{0}{1}{1} &  1 &  0 & -1 &  0 & \cdots & 0 & 0 & 0 & 0 \\
		\Msup{B}{1}{0}{1} & \Msup{B}{1}{1}{1} &  0 &  1 &  0 & -1 & \cdots & 0 & 0 & 0 & 0 \\
		\Msup{B}{0}{0}{2} & \Msup{B}{0}{1}{2} &  0 &  0 &  1 &  0 & \cdots & 0 & 0 & 0 & 0 \\
		\Msup{B}{1}{0}{2} & \Msup{B}{1}{1}{2} &  0 &  0 &  0 &  1 & \cdots & 0 & 0 & 0 & 0 \\
		\vdots & \vdots & \vdots & \vdots & \vdots & \vdots & \ddots & \vdots & \vdots & \vdots & \vdots \\
		\Msup{B}{0}{0}{m-1} & \Msup{B}{0}{1}{m-1} &  0 &  0 &  0 &  0 & \cdots & 1 & 0 & -1 & 0 \\
		\Msup{B}{1}{0}{m-1} & \Msup{B}{1}{1}{m-1} &  0 &  0 &  0 &  0 & \cdots & 0 & 1 & 0 & -1	\\	
		\Msup{B}{0}{0}{m} & \Msup{B}{0}{1}{m} &  0 &  0 &  0 &  0 & \cdots & 0 & 0 & 1 & 0 \\
		\Msup{B}{1}{0}{m} & \Msup{B}{1}{1}{m} &  0 &  0 &  0 &  0 & \cdots & 0 & 0 & 0 & 1		
	\EP,
\EE
and, for $k\in\{0,1\}$, $r\in\{0,\ldots,m\}$,
\BE \label{eqn:DtoN.2.B}
	\Msup{B}{0}{k}{r} (\la)= \frac{1}{2}E_r(-\lambda)\left[ \Msup{b}{0}{k}{r}\frac{1}{i\lambda} + \Msup{b}{1}{k}{r}\right], \qquad
	\Msup{B}{1}{k}{r}(\la) = \frac{1}{2}E_r( \lambda)\left[-\Msup{b}{0}{k}{r}\frac{1}{i\lambda} + \Msup{b}{1}{k}{r}\right]=\Msup{B}{0}{k}{r} (-\la).
\EE
\end{subequations}
So $\mathcal{A}$ is the matrix with $1$ on the diagonal, $-1$ on the second super-diagonal, and $0$ elsewhere, with the first two columns replaced as shown.

\begin{lem} \label{lem:DtoN.soln.2}
	\textup{\textbf{(a)}} The linear system
	\BE \label{eqn:DtoN.solnlem.2:system}
		(x_0,X_0,x_1,X_1,\ldots,x_m,X_m) \mathcal{A} = (0,0,y_1,Y_1,y_2,Y_2,\ldots,y_m,Y_m),
	\EE
	with $\mathcal{A}$ given by equation~\eqref{eqn:DtoN.2.A} has solution
	\begin{multline} \label{eqn:DtoN.solnlem.2:x_r}
		x_r(\lambda) = \frac{1}{\Delta(\lambda)}
			\Bigg[
			\sum_{j,k=0}^{m} \left[\Msup{B}{0}{0}{j}\Msup{B}{1}{1}{k}-\Msup{B}{1}{0}{k}\Msup{B}{0}{1}{j}\right]
			\left\{\begin{smallmatrix}1\text{\textup{ if }}j<r \\ 0\text{\textup{ if }}j=r \\ -1\text{\textup{ if }}j>r \end{smallmatrix}\right\} \sum_{l=\min\{j,r\}+1}^{\max\{j,r\}} y_l(\lambda) \\
			+
			\frac{1}{2}\sum_{j,k=0}^{m} \left[\Msup{B}{1}{0}{j}\Msup{B}{1}{1}{k}-\Msup{B}{1}{0}{k}\Msup{B}{1}{1}{j}\right]
			\left\{\begin{smallmatrix}1\text{\textup{ if }}j<k \\ 0\text{\textup{ if }}j=k \\ -1\text{\textup{ if }}j>k \end{smallmatrix}\right\} \sum_{l=\min\{j,k\}+1}^{\max\{j,k\}} Y_l(\lambda)
			\Bigg],
	\end{multline}
	and
	\begin{multline} \label{eqn:DtoN.solnlem.2:X_r}
		X_r(\lambda) = \frac{1}{\Delta(\lambda)}
			\Bigg[
			\sum_{j,k=0}^{m} \left[\Msup{B}{0}{0}{j}\Msup{B}{1}{1}{k}-\Msup{B}{1}{0}{k}\Msup{B}{0}{1}{j}\right]
			\left\{\begin{smallmatrix}1\text{\textup{ if }}k<r\vphantom{j} \\ 0\text{\textup{ if }}k=r\vphantom{j} \\ -1\text{\textup{ if }}k>r\vphantom{j} \end{smallmatrix}\right\} \sum_{l=\min\{k,r\}+1}^{\max\{k,r\}} Y_l(\lambda) \\
			-
			\frac{1}{2}\sum_{j,k=0}^{m} \left[\Msup{B}{0}{0}{j}\Msup{B}{0}{1}{k}-\Msup{B}{0}{0}{k}\Msup{B}{0}{1}{j}\right]
			\left\{\begin{smallmatrix}1\text{\textup{ if }}j<k \\ 0\text{\textup{ if }}j=k \\ -1\text{\textup{ if }}j>k \end{smallmatrix}\right\} \sum_{l=\min\{j,k\}+1}^{\max\{j,k\}} y_l(\lambda)
			\Bigg],
	\end{multline}
	where
	\BE \label{eqn:DtoN.solnlem.2:Delta}
		\Delta(\lambda) = \sum_{j,k=0}^m \left[\Msup{B}{0}{0}{j}\Msup{B}{1}{1}{k}-\Msup{B}{1}{0}{k}\Msup{B}{0}{1}{j}\right].
	\EE
	
	\smallskip
	\noindent \textup{\textbf{(b)}} The linear system
	\BE \label{eqn:DtoN.solnlem.2h:system}
		(x_0,X_0,x_1,X_1,\ldots,x_m,X_m) \mathcal{A} = (h_0,h_1,0,\ldots,0),
	\EE
	with $\mathcal{A}$ given by equation~\eqref{eqn:DtoN.2.A} has the ($r$-independent) solution
	\begin{align} \label{eqn:DtoN.solnlem.2h:x_r}
		x_r(\lambda) &= \frac{1}{\Delta(\lambda)} \sum_{j=0}^m \left( \Msup{B}{1}{1}{j} h_0 - \Msup{B}{1}{0}{j} h_1 \right), \\ \label{eqn:DtoN.solnlem.2h:X_r}
		X_r(\lambda) &= \frac{1}{\Delta(\lambda)} \sum_{j=0}^m \left( \Msup{B}{0}{0}{j} h_1 - \Msup{B}{0}{1}{j} h_0 \right),
	\end{align}
	where $\Delta(\lambda)$ is given by equation~\eqref{eqn:DtoN.solnlem.2:Delta}.
\end{lem}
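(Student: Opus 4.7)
The plan is to exploit the nearly block-triangular structure of $\mathcal{A}$ in~\eqref{eqn:DtoN.2.A}. Apart from its first two columns, $\mathcal{A}$ has an extremely simple pattern — $1$ on the diagonal, $-1$ on the second super-diagonal — so the equations coming from columns $3$ through $2m+2$ decouple into two independent telescoping recurrences, one in the $x_r$ and one in the $X_r$. Reading off rows, columns $2k+1$ and $2k+2$ for $k=1,\ldots,m$ yield $x_k - x_{k-1} = y_k$ and $X_k - X_{k-1} = Y_k$ in part~(a), or the same identities with zeros on the right in part~(b). The first two columns impose the two further equations $\sum_{r=0}^m\bigl(\Msup{B}{0}{j}{r} x_r + \Msup{B}{1}{j}{r} X_r\bigr) = h_j$ (part~(b)) or $=0$ (part~(a)), for $j=0,1$.

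For part~(b), the telescoping with zero right-hand side forces $x_r\equiv x$ and $X_r\equiv X$. The first two equations then become a $2\times 2$ linear system in $(x,X)$ whose coefficient matrix has entries $\sum_r\Msup{B}{i}{j}{r}$. Distributing these sums shows that its determinant agrees with $\Delta(\lambda)$ as in~\eqref{eqn:DtoN.solnlem.2:Delta}, and Cramer's rule then yields~\eqref{eqn:DtoN.solnlem.2h:x_r}--\eqref{eqn:DtoN.solnlem.2h:X_r} at once.

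For part~(a), the telescoping instead gives $x_r = x_0 + \sum_{l=1}^r y_l$ and $X_r = X_0 + \sum_{l=1}^r Y_l$. Substituting into the first two equations yields a $2\times 2$ system for $(x_0,X_0)$ with the same coefficient matrix as in part~(b), but with right-hand sides involving double sums in the $\Msup{B}{i}{j}{r}$ and the partial sums $\sum_{l=1}^j y_l$, $\sum_{l=1}^j Y_l$. Applying Cramer's rule, multiplying through by $\Delta$, and then using $\Delta\cdot x_r = \Delta\cdot x_0 + \Delta\sum_{l=1}^r y_l$, the $y$-part of $\Delta\cdot x_r$ takes the shape $\sum_{j,k}\bigl(\Msup{B}{0}{0}{j}\Msup{B}{1}{1}{k}-\Msup{B}{1}{0}{k}\Msup{B}{0}{1}{j}\bigr)\bigl(\sum_{l=1}^r-\sum_{l=1}^j\bigr) y_l$. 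The elementary identity $\sum_{l=1}^r - \sum_{l=1}^j = \varepsilon(j,r)\sum_{l=\min(j,r)+1}^{\max(j,r)}$, with $\varepsilon$ exactly the signed bracket appearing in the lemma statement, converts this directly into the $y$-portion of~\eqref{eqn:DtoN.solnlem.2:x_r}.

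The $Y$-portion of $\Delta\cdot x_r$ (and symmetrically, the $y$-portion of $\Delta\cdot X_r$) requires one further algebraic manoeuvre to reach the manifestly antisymmetric, factor-of-$\tfrac12$ form stated. After substitution it reads $\sum_{j,k}\Msup{B}{1}{0}{k}\Msup{B}{1}{1}{j}\sum_{l=1}^j Y_l - \sum_{j,k}\Msup{B}{1}{1}{k}\Msup{B}{1}{0}{j}\sum_{l=1}^j Y_l$; swapping $j\leftrightarrow k$ in the second sum rewrites this as $\sum_{j,k}\Msup{B}{1}{0}{k}\Msup{B}{1}{1}{j}\bigl(\sum_{l=1}^j - \sum_{l=1}^k\bigr) Y_l$. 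Applying the $\varepsilon$-identity once more, and then antisymmetrizing in $(j,k)$ (permissible because the product of the minor bracket and $\varepsilon$ is symmetric), produces the factor of $\tfrac12$ and the explicit bracket $\Msup{B}{1}{0}{j}\Msup{B}{1}{1}{k}-\Msup{B}{1}{0}{k}\Msup{B}{1}{1}{j}$ of~\eqref{eqn:DtoN.solnlem.2:x_r}. The computation of $X_r$ is entirely parallel, with the roles of the two rows of the initial $2\times 2$ system interchanged. The main obstacle is purely bookkeeping: keeping track of which pair of indices the $\varepsilon$ factor attaches to in each of the four contributions ($y$ in $x_r$, $Y$ in $x_r$, $y$ in $X_r$, $Y$ in $X_r$) and ensuring the antisymmetrization is carried out consistently so that the normalised form stated in the lemma is recovered.
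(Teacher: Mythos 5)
Your proof is correct, but it takes a genuinely different route from the paper's. The paper proves Lemma~\ref{lem:DtoN.soln.2}(a) by \emph{direct verification}: it substitutes the stated formulas into the system, checks the telescoping relations $x_r-x_{r-1}=y_r$, $X_r-X_{r-1}=Y_r$ (easy, since the antisymmetrized double sums are $r$-independent), and then establishes $\sum_r(\Msup{B}{0}{s}{r}x_r+\Msup{B}{1}{s}{r}X_r)=0$ through a sequence of index-permutation and antisymmetry cancellations. You instead \emph{derive} the solution: you read off from the structure of $\mathcal{A}$ that columns $3$ through $2m+2$ force $x_r=x_0+\sum_{l=1}^r y_l$ and $X_r=X_0+\sum_{l=1}^r Y_l$, reduce the remaining two equations to a $2\times2$ Cramer computation for $(x_0,X_0)$ whose determinant is exactly $\Delta$, and recover the stated antisymmetrized form via the identity $\sum_{l=1}^r-\sum_{l=1}^j=\varepsilon(j,r)\sum_{l=\min\{j,r\}+1}^{\max\{j,r\}}$ followed by symmetrization in $(j,k)$ (which produces the factor $\tfrac12$). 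I checked the Cramer step and both symmetrizations; they reproduce \eqref{eqn:DtoN.solnlem.2:x_r}--\eqref{eqn:DtoN.solnlem.2:X_r} and \eqref{eqn:DtoN.solnlem.2h:x_r}--\eqref{eqn:DtoN.solnlem.2h:X_r} exactly. The paper remarks that a constructive proof ``via Cramer's rule, Laplace's formula and lemma~\ref{lem:detlem}'' is possible but ``somewhat complex'' and therefore opts for verification; your version shows the constructive route is in fact quite clean if one telescopes \emph{first} and applies Cramer only to the resulting $2\times2$ system, thereby bypassing Lemma~\ref{lem:detlem} entirely. Your approach additionally yields uniqueness of the solution whenever $\Delta\neq0$, which the verification argument does not address; the paper's approach, by contrast, requires no derivation and is shorter once the formulas are conjectured.
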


We use lemma~\ref{lem:DtoN.soln.2}, whose proof is presented in appendix~\ref{sec:AppA}, to solve linear system~\eqref{eqn:DtoN.2}, and substitute the solution into
\begin{multline} \label{eqn:q.implicit.2}
	2\pi q(x,t) = \int_\R \re^{i\lambda x-a\lambda^2t}\hat{q}_0^r(\lambda)\D \lambda - \int_{\partial D_R^+} \re^{i\lambda x-a\lambda^2t} \left[ f_0^{0}(\lambda)+f_1^{0}(\lambda) \right] \D\lambda \\
	- \int_{\partial D_R^-} \re^{i\lambda x-a\lambda^2t}\re^{-i\lambda}\left[ f_0^{m}(\lambda)+f_1^{m}(\lambda) \right] \D\lambda,
\end{multline}
which is equation~\eqref{eqn:q.implicit} for the particular value $n=2$.
Explicitly, we find that the relevant data correspond to $x_0$ and $x_m$ and (for the homogenous system, $g_0=g_1=0$) are given by
\begin{subequations} \label{eqn:DtoNMapSolution.n2}
\begin{multline}
	f_{0}^0(\lambda) + f_1^0(\la) = \frac{1}{\Delta(\lambda)}
			\Bigg[\sum_{j=1}^m
			\sum_{k=0}^{m} \left[\Msup{B}{0}{0}{j}\Msup{B}{1}{1}{k}-\Msup{B}{1}{0}{k}\Msup{B}{0}{1}{j}\right]\sum_{l=1}^{j} \hat{q}_0^l(\lambda) \\
			-
			\frac{1}{2}\sum_{j,k=0}^{m} \left[\Msup{B}{1}{0}{j}\Msup{B}{1}{1}{k}-\Msup{B}{1}{0}{k}\Msup{B}{1}{1}{j}\right]
			\left\{\begin{smallmatrix}1\text{\textup{ if }}j<k \\ 0\text{\textup{ if }}j=k \\ -1\text{\textup{ if }}j>k \end{smallmatrix}\right\} \sum_{l=\min\{j,k\}+1}^{\max\{j,k\}} \hat{q}_0^l(-\lambda)\Bigg] + Z^+(\lambda),
\end{multline}
and
\begin{multline}
	\re^{-i\lambda}\left[f_{0}^m(\lambda) + f_1^m(\la)\right] = \frac{-1}{\Delta(\lambda)}
			\Bigg[\sum_{j=0}^{m-1}
			\sum_{k=0}^{m} \left[\Msup{B}{0}{0}{j}\Msup{B}{1}{1}{k}-\Msup{B}{1}{0}{k}\Msup{B}{0}{1}{j}\right]\sum_{l=j+1}^{m} \hat{q}_0^l(\lambda) \\
			+
			\frac{1}{2}\sum_{j,k=0}^{m} \left[\Msup{B}{1}{0}{j}\Msup{B}{1}{1}{k}-\Msup{B}{1}{0}{k}\Msup{B}{1}{1}{j}\right]
			\left\{\begin{smallmatrix}1\text{\textup{ if }}j<k \\ 0\text{\textup{ if }}j=k \\ -1\text{\textup{ if }}j>k \end{smallmatrix}\right\} \sum_{l=\min\{j,k\}+1}^{\max\{j,k\}} \hat{q}_0^l(-\lambda)\Bigg] + Z^-(\lambda).
\end{multline}
\end{subequations}
In this expression, $Z^\pm$ represents terms with $\hat q_0^r$ replaced by $-\hat{q}_\tau^r$.
However, as we show in the next section, these terms are analytic and have sufficient decay inside $D_R^\pm$ to guarantee, using Jordan's lemma, that they do not contribute to the integral representation of the solution.
It follows that, when the corresponding integral from equation~\eqref{eqn:q.implicit} is applied to both sides of one of equations~\eqref{eqn:DtoNMapSolution.n2}, the term involving $Z^\pm$ may be dropped and the equality remains true.

\subsection{The role of analyticity and an effective solution representation}
The solution given above 
does not provide an effective representation of the solution.
Indeed, we have ignored the terms involving the Fourier transform of the solution at time $\tau$, denoted by $\hat{q}_\tau^r$.
In this section, we show that the contribution of the terms involving  $\hat{q}_\tau^r$ vanishes from the solution representation.

The following lemma, whose proof is presented in appendix~\ref{sec:AppB}, provides the essential asymptotic result upon which we rely.

\begin{lem} \label{lem:uniq.solve.lem.2}
	For $\theta\in(0,\frac{\pi}{2})$, define
	\BE
		\C^\pm_\theta := \{ \lambda\in\C : \theta\leq\arg(\pm\lambda)\leq\pi-\theta \}.
	\EE
	\textup{\textbf{(a)}} Suppose the multipoint conditions are such that $\delta^+(\lambda)$ is not identically zero, where
	\BE
		\delta^+(\lambda) := \Msup{B}{0}{0}{0}\Msup{B}{1}{1}{m} - \Msup{B}{1}{0}{m}\Msup{B}{0}{1}{0} = \frac{1}{4}E_m(\lambda)\det
			\BP
				\Msup{b}{0}{0}{0}\frac{1}{i\lambda}+\Msup{b}{1}{0}{0} & \Msup{b}{0}{1}{0}\frac{1}{i\lambda}+\Msup{b}{1}{1}{0} \\
				-\Msup{b}{0}{0}{m}\frac{1}{i\lambda}+\Msup{b}{1}{0}{m} & -\Msup{b}{0}{1}{m}\frac{1}{i\lambda}+\Msup{b}{1}{1}{m}
			\EP.
	\EE
	Then, for all $\theta$, as $\lambda\to\infty$ from within $\C^+_\theta$ away from zeros of $\Delta$, the solution of system~\eqref{eqn:DtoN.solnlem.2:system}, with $y_r=\hat{q}_\tau^r(\lambda)$ and $Y_r=\hat{q}_\tau^r(-\lambda)$ is
	\BE
		x_0(\lambda) = \frac{\gamma^+(\lambda)}{\delta^+(\lambda)} + \mathcal{O}(\lambda^{-1}),
	\EE
	where
	\BE
		\gamma^+(\lambda) = \frac{-1}{2\lambda^2}E_m(\lambda) \det \BP g_0(\tau) & g_1(\tau) \\ \Msup{b}{1}{0}{m} & \Msup{b}{1}{1}{m} \EP.
	\EE
	\textup{\textbf{(b)}} Suppose the multipoint conditions are such that $\delta^-(\lambda)$ is not identically zero, where
	\BE
		\delta^-(\lambda) := \Msup{B}{0}{0}{m}\Msup{B}{1}{1}{0} - \Msup{B}{1}{0}{0}\Msup{B}{0}{1}{m} = \frac{1}{4}E_m(-\lambda)\det
			\BP
				\Msup{b}{0}{0}{m}\frac{1}{i\lambda}+\Msup{b}{1}{0}{0} & \Msup{b}{0}{1}{m}\frac{1}{i\lambda}+\Msup{b}{1}{1}{0} \\
				-\Msup{b}{0}{0}{0}\frac{1}{i\lambda}+\Msup{b}{1}{0}{m} & -\Msup{b}{0}{1}{0}\frac{1}{i\lambda}+\Msup{b}{1}{1}{m}
			\EP
		=-\delta^+(-\lambda).
	\EE
	Then, for all $\theta$, as $\lambda\to\infty$ from within $\C^-_\theta$ away from zeros of $\Delta$, the solution of system~\eqref{eqn:DtoN.solnlem.2:system}, with $y_r=\hat{q}_\tau^r(\lambda)$ and $Y_r=\hat{q}_\tau^r(-\lambda)$ is
	\BE
		x_m(\lambda) = \frac{\gamma^-(\lambda)}{\delta^-(\lambda)} + \mathcal{O}(E_m(\lambda)\lambda^{-1}),
	\EE
	where
	\BE
		\gamma^-(\lambda) = \frac{-1}{2\lambda^2} \det \BP g_0(\tau) & g_1(\tau) \\ \Msup{b}{1}{0}{0} & \Msup{b}{1}{1}{0} \EP.
	\EE
\end{lem}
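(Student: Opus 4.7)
My plan is to specialise the closed-form expression for $x_0(\lambda)$ from Lemma~\ref{lem:DtoN.soln.2}(a) to $y_r=\hat{q}_\tau^r(\lambda)$, $Y_r=\hat{q}_\tau^r(-\lambda)$, and extract the asymptotic as $|\lambda|\to\infty$ in $\C^+_\theta$.

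First I control the denominator. For $\lambda\in\C^+_\theta$ one has $\Im(\lambda)\ge|\lambda|\sin\theta$, so $|E_r(\lambda)|=e^{\eta_r\Im(\lambda)}$ is largest at $r=m$ and $|E_r(-\lambda)|=e^{-\eta_r\Im(\lambda)}$ at $r=0$. Since $\Msup{B}{0}{k}{r}$ carries $E_r(-\lambda)$ and $\Msup{B}{1}{k}{r}$ carries $E_r(\lambda)$, each summand in $\Delta(\lambda)=\sum_{j,k=0}^m[\Msup{B}{0}{0}{j}\Msup{B}{1}{1}{k}-\Msup{B}{1}{0}{k}\Msup{B}{0}{1}{j}]$ has exponential magnitude $e^{(\eta_k-\eta_j)\Im(\lambda)}$, uniquely maximised at $(j,k)=(0,m)$; that term is precisely $\delta^+(\lambda)$, and all others are exponentially smaller, so $\Delta(\lambda)=\delta^+(\lambda)[1+O(e^{-c|\lambda|})]$ for some $c>0$.

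Next I expand the numerator. Two successive integrations by parts give $\hat{q}_\tau^l(\lambda)=\frac{i}{\lambda}e^{-i\lambda\eta_l}q(\eta_l,\tau)+\frac{1}{\lambda^2}e^{-i\lambda\eta_l}\partial_xq(\eta_l,\tau)+O(\lambda^{-3}e^{\eta_l\Im(\lambda)})$, with the $\eta_{l-1}$ endpoint exponentially subdominant in $\C^+_\theta$; dually $\hat{q}_\tau^l(-\lambda)$ is exponentially negligible for $l\ge2$ and reduces to $\frac{iq(0,\tau)}{\lambda}-\frac{\partial_xq(0,\tau)}{\lambda^2}+O(\lambda^{-3})$ for $l=1$. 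Substituting these into the Lemma~\ref{lem:DtoN.soln.2}(a) formula, expanding each $\Msup{B}{k'}{k}{r}$ to next-to-leading order in $\lambda^{-1}$, and discarding summands whose exponential factor is strictly smaller than that of $\delta^+(\lambda)$ (these contribute $O(\lambda^{-1})$ or better after dividing by $\Delta$), the numerator reduces to an explicit expression in $q(\eta_r,\tau)$ and $\partial_xq(\eta_r,\tau)$ with weights polynomial in the $\Msup{b}{k}{j}{r}$.

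Finally I invoke the multipoint boundary conditions at $t=\tau$, $\sum_r\Msup{b}{0}{j}{r}q(\eta_r,\tau)+\sum_r\Msup{b}{1}{j}{r}\partial_xq(\eta_r,\tau)=g_j(\tau)$ for $j=0,1$. The combination of $\partial_xq(\eta_r,\tau)$ arising at order $\lambda^{-2}$ in the numerator, namely $\sum_{r=0}^m[\Msup{b}{1}{0}{r}\Msup{b}{1}{1}{m}-\Msup{b}{1}{0}{m}\Msup{b}{1}{1}{r}]\partial_xq(\eta_r,\tau)$, rewrites by linearity of the boundary conditions as $\Msup{b}{1}{1}{m}g_0(\tau)-\Msup{b}{1}{0}{m}g_1(\tau)=\det\bigl(\begin{smallmatrix}g_0(\tau)&g_1(\tau)\\\Msup{b}{1}{0}{m}&\Msup{b}{1}{1}{m}\end{smallmatrix}\bigr)$ plus extra terms in $q(\eta_r,\tau)$. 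Dividing by the leading part of $\delta^+(\lambda)$ yields precisely $\gamma^+(\lambda)/\delta^+(\lambda)$, while every other contribution—the $O(\lambda^{-1})$ piece in $q(\eta_r,\tau)$, the exponentially subdominant pairs from $\Delta$, and the higher-order IBP remainders—is collectively $O(\lambda^{-1})$, uniform in $\lambda\in\C^+_\theta$ bounded away from zeros of $\Delta$. The principal obstacle is the combinatorial bookkeeping of which $(j,k,l)$ triples carry the dominant exponential factor at each order of $\lambda^{-1}$; none of it depends on the unknown $q(\eta_r,\tau)$ beyond its boundedness. Part~(b) follows by a symmetric argument in $\C^-_\theta$: the dominant summand of $\Delta$ is now $(j,k)=(m,0)$, equal to $\delta^-(\lambda)$, and applying the boundary conditions to the analogous combination at the endpoint $r=0$ extracts $\gamma^-(\lambda)/\delta^-(\lambda)$, with the remaining $O(E_m(\lambda)\lambda^{-1})$ bound tracking the residual exponential factor outside the $r=0$ pair.
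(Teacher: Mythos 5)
Your overall strategy is the same as the paper's: identify $\delta^+(\lambda)$ as the dominant $(j,k)=(0,m)$ summand of $\Delta$, expand $\hat{q}_\tau^l(\pm\lambda)$ by integration by parts, track the dominant exponential factors in the Cramer-type formula of Lemma~\ref{lem:DtoN.soln.2}(a), and close the computation with the multipoint conditions at $t=\tau$. Two points need repair, one of them substantive.

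The substantive one is your claim that ``$\hat{q}_\tau^l(-\lambda)$ is exponentially negligible for $l\ge2$.'' While $|\hat{q}_\tau^l(-\lambda)|=\mathcal{O}(E_{l-1}(-\lambda)\lambda^{-1})$ is indeed exponentially small in $\C^+_\theta$ for $l\ge 2$, in the formula for $x_0$ these $Y_l$ are multiplied by $\tfrac12\bigl[\Msup{B}{1}{0}{j}\Msup{B}{1}{1}{k}-\Msup{B}{1}{0}{k}\Msup{B}{1}{1}{j}\bigr]$, which carries the exponentially \emph{large} factor $E_j(\lambda)E_k(\lambda)$. For $l=\min\{j,k\}+1$ and $\max\{j,k\}=m$ the product $E_j(\lambda)E_k(\lambda)E_{l-1}(-\lambda)$ is exactly $\mathcal{O}(E_m(\lambda))$, i.e.\ comparable to $\delta^+(\lambda)$, so these terms are \emph{not} discardable: they are precisely what supplies the interior values $q(\eta_{l-1},\tau)$, $q_x(\eta_{l-1},\tau)$ for $l-1=0,\dots,m-1$ (the paper's rewriting as $\sum_{j=0}^{m-1}\sum_{k=j+1}^m\det(\cdots)\,\hat{q}_\tau^{j+1}(-\lambda)$ with dominant terms at $k=m$). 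Without them your final combination $\sum_{r=0}^m\bigl[\Msup{b}{1}{0}{r}\Msup{b}{1}{1}{m}-\Msup{b}{1}{0}{m}\Msup{b}{1}{1}{r}\bigr]\partial_xq(\eta_r,\tau)$ cannot arise, and the multipoint condition (which sums over all $r$) cannot be invoked. Your later criterion --- discard only summands whose \emph{total} exponential factor is strictly smaller than $E_m(\lambda)$ --- is the correct one and contradicts the negligibility claim; you should apply it uniformly and delete the claim.

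The minor one: two integrations by parts leave a remainder $\tfrac{1}{(i\lambda)^2}\int e^{-i\lambda x}q_{xx}\,\D x=\mathcal{O}(\lambda^{-2}E_l(\lambda))$, not $\mathcal{O}(\lambda^{-3}E_l(\lambda))$. Since $\delta^+(\lambda)$ may be as small as $\mathcal{O}(E_m(\lambda)\lambda^{-2})$, you genuinely need the numerator to precision $\mathcal{O}(E_m(\lambda)\lambda^{-3})$, so a third integration by parts (as in the paper) is required to justify the stated error tag.
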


It is an immediate corollary of lemma~\ref{lem:uniq.solve.lem.2} that the classical (homogeneous or inhomogeneous) Dirichlet, Neumann, and Robin boundary conditions for $m=1$ have $x_0(\lambda)=\mathcal{O}(\lambda^{-1})$ in $\C^+_\theta$ and $x_m(\lambda)=\mathcal{O}(E_m(\lambda)\lambda^{-1})$ in $\C^-_\theta$. Indeed:
\begin{description}
	\item[1. Dirichlet]{$\Msup{b}{1}{0}{m}=\Msup{b}{1}{1}{m}=\Msup{b}{1}{0}{0}=\Msup{b}{1}{1}{0}=0$, so $\gamma^\pm=0$.}
	\item[2. Neumann]{$\delta^+(\lambda)=\pm E_m(\lambda)/4$, so $\delta^+$ dominates $\gamma^+$. Similarly, $\delta^-(\lambda)=\pm E_m(-\lambda)/4$, so $\delta^-(\lambda)E_m(\lambda)$ dominates $\gamma^-$}
	\item[3. Robin]{$\delta^\pm(\lambda)$ has a nonzero $\mathcal{O}(E_m(\pm\lambda))$ term, so $\delta^+$ dominates $\gamma^+$, and $\delta^-(\lambda)E_m(\lambda)$ dominates $\gamma^-$.}
\end{description}
We now give a few multipoint examples for which the same asymptotic behaviour holds:
\begin{description}
\item[4.]
If the multipoint conditions are all order $0$, then $\Msup{b}{1}{j}{r}=0$ for all $j\in\{0,1,\ldots,n-1\}$ and for all $r\in\{0,1,\ldots,m\}$. Hence $\gamma^\pm=0$, so $x_0(\lambda)$ is $\mathcal{O}(\lambda^{-1})$ and $x_m(\lambda)$ is $\mathcal{O}(E_m(\lambda)\lambda^{-1})$.
Hence, by proposition~\ref{prop:uniq.solve.2} the {\bf Dirichlet initial-multipoint value problem} for the heat equation is uniquely solvable by this method. 
\item[5.]
Similarly, Neumann and Robin initial-multipoint value problems, each defined in the natural way, are uniquely solvable.
\item[6.]
Consider an initial-multipoint value problem~\eqref{eqn:IMVP} with $m\geq1$ and multipoint conditions
\begin{alignat*}{6}
	&q(0,t) +{} &&\sum_{r=1}^{m-1}\left[\Msup{b}{0}{0}{r} + \Msup{b}{1}{0}{r}\partial_x\right] q(\eta_r,t) &&+{} \beta \partial_x &&q(1,t) &&= g_0(t) & \qquad\qquad t &\in [0,T], \\
	&         &&\sum_{r=1}^{m-1}\left[\Msup{b}{0}{1}{r} + \Msup{b}{1}{1}{r}\partial_x\right] q(\eta_r,t) &&+{} &&q(1,t) &&= g_1(t) & t &\in [0,T].
\end{alignat*}
In particular, 
\BES
	\BP
		\Msup{b}{0}{0}{0} & \Msup{b}{0}{1}{0} \\
		\Msup{b}{1}{0}{0} & \Msup{b}{1}{1}{0} \\
		\Msup{b}{0}{0}{m} & \Msup{b}{0}{1}{m} \\
		\Msup{b}{1}{0}{m} & \Msup{b}{1}{1}{m}
	\EP
	=
	\BP
		1 & 0 \\ 0 & 0 \\
		0 & 1 \\ \beta & 0
	\EP,
\EES
so
\BES
	\delta^+(\lambda) = \frac{1}{4}E_m(\lambda)\det\BP\frac{1}{i\lambda}&0\\\beta&\frac{-1}{i\lambda}\EP = \frac{1}{4\lambda^2}E_m(\lambda).
\EES
It is immediate that $\gamma^-(\lambda)=0$.
But we would need to show that $\gamma^+(\lambda)=0$ in order to conclude $x_0(\lambda)=\mathcal{O}(\lambda^{-1})$.
However
\BES
	\gamma^+(\lambda) = \frac{1}{2\lambda^2} E_m(\lambda) \beta g_1(\tau).
\EES
Hence, provided the second multipoint condition is homogeneous (or provided it is at least possible to find $\tau\in[t,T]$ such that $g_1(\tau)=0$), we have that $x_0(\lambda)=\mathcal{O}(\lambda^{-1})$.
\item[7.]
For general inhomogeneous data, it is still possible to show that unique solvability holds, using an adaptation of the ``extension of spatial domain'' argument in~\cite{FP2001a}.
\end{description}

\smallskip
A full classification of the multipoint conditions that yield this asymptotic behaviour is beyond the scope of this paper.
However, we indicate in the next proposition how an $x_0(\lambda)=\mathcal{O}(\lambda^{-1})$, $x_m(\lambda)=\mathcal{O}(E_m(\lambda)\lambda^{-1})$ result from lemma~\ref{lem:uniq.solve.lem.2} implies that the contribution of $\hat{q}_\tau^r$ vanishes from the solution representation.

\begin{prop} \label{prop:uniq.solve.2}
	Suppose that the solution of system~\eqref{eqn:DtoN.solnlem.2:system}, with $y_r=\hat{q}_\tau^r(\lambda)$ and $Y_r=\hat{q}_\tau^r(-\lambda)$ satisfies both
	\begin{alignat}{2}
		x_0(\lambda) &= \mathcal{O}(\lambda^{-1}) & \mbox{ as } \lambda &\to\infty \mbox{ from within } \overline{D^+_R} \\
		x_m(\lambda) &= \mathcal{O}(E_m(\lambda)\lambda^{-1}) & \mbox{ as } \lambda &\to\infty \mbox{ from within } \overline{D^-_R}
	\end{alignat}
	and also $\Re(a)>0$.
	Then, provided $R>0$ is chosen sufficiently large, and for all $\tau\in[t,T]$,
	\begin{align}
		0 &= \int_{\partial D_R^+} \re^{i\lambda x + a\lambda^2(\tau-t)}x_0(\lambda)\D\lambda, \\
		0 &= \int_{\partial D_R^-} \re^{i\lambda x + a\lambda^2(\tau-t)}x_m(\lambda)\D\lambda.
	\end{align}
	\end{prop}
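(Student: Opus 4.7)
The plan is to close the unbounded contour $\partial D_R^+$ in the upper half plane by adjoining a circular arc at infinity, apply Cauchy's theorem to the resulting bounded region, and show that the arc contribution vanishes in the limit. An entirely analogous argument in the lower half plane handles the second identity.

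First I would verify analyticity of the integrand in $\overline{D_R^+}$. By lemma~\ref{lem:DtoN.soln.2}(a), $x_0(\la)$ is expressed via Cramer's rule as a rational combination of the entire functions $E_r(\pm\la)$ and $\hat{q}_\tau^r(\pm\la)$ (with at worst a removable singularity at $\la=0$ from the $1/i\la$ factors), divided by the exponential polynomial $\Delta(\la)$. Classical results on the asymptotic distribution of the zeros of exponential polynomials~\cite{Lan1931a} guarantee that such zeros accumulate only along finitely many rays within strips of bounded width, so $R>0$ may be chosen large enough that every zero of $\Delta$ lying in $\overline{D_R^+}$ is excluded. Hence the full integrand is analytic on $\overline{D_R^+}$.

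Next, for $R'>R$, apply Cauchy's theorem to the bounded domain $U_{R'}:=D_R^+\cap\{|\la|<R'\}$:
\BES
	\oint_{\partial U_{R'}} \re^{i\la x + a\la^2(\tau-t)} x_0(\la) \D\la = 0.
\EES
The boundary decomposes into $\partial D_R^+\cap\overline{B_{R'}}$ together with the outer arc $\gamma_{R'}:=\overline{D_R^+}\cap\{|\la|=R'\}$, so it suffices to show that the integral along $\gamma_{R'}$ tends to zero as $R'\to\infty$. By hypothesis $|x_0(\la)|\leq C/R'$ on $\gamma_{R'}$. Since $\Re(a\la^2)\leq 0$ throughout $\overline{D_R}$ and $\tau-t\geq 0$, one has $|\re^{a\la^2(\tau-t)}|\leq 1$ there. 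Parametrising $\la=R'\re^{i\varphi}$, a standard Jordan-lemma estimate yields
\BES
	\int_{\gamma_{R'}} |\re^{i\la x}|\,|\D\la| \leq R'\int_0^\pi \re^{-xR'\sin\varphi}\D\varphi \leq \frac{\pi}{x},
\EES
uniformly in $R'$, since $x>0$. Combining, the arc integral is bounded by $C\pi/(xR')\to 0$, and the first identity follows. The second identity is proved by the mirror argument in $\overline{D_R^-}$: the hypothesised factor $E_m(\la)=\re^{-i\la}$ in the asymptotic for $x_m$ combines with $\re^{i\la x}$ to yield $\re^{i\la(x-1)}$, and Jordan's lemma applies in the lower half plane because $x-1<0$.

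The main technical delicacy is the limiting case $\tau=t$, where $\re^{a\la^2(\tau-t)}\equiv 1$ provides no Gaussian decay; then the entire vanishing of the arc integral must be extracted from Jordan's lemma together with the bare $\mathcal{O}(\la^{-1})$ decay of $x_0$ (respectively of $\re^{i\la}x_m$), and the strict interiority $x\in(0,1)$ is crucial. For $\tau>t$ the estimate is extremely robust because $|\re^{a\la^2(\tau-t)}|$ decays like $\re^{-c(\tau-t)R'^2}$ in the interior of the sector.
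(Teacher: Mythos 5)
Your argument is correct and follows essentially the same route as the paper: analyticity of $x_0$, $x_m$ on $\overline{D_R^\pm}$ is secured by locating the zeros of the exponential polynomial $\lambda^2\Delta(\lambda)$ via~\cite{Lan1931a} and using $\Re(a)>0$ to push $\overline{D_R}$ away from them, after which Jordan's lemma (which you spell out via the truncated contour and the arc estimate) kills the integrals. The only small imprecision is that the zeros lie in \emph{logarithmic} strips about the real axis rather than strips of bounded width, but this does not affect the conclusion since $\overline{D_R}$ is asymptotically sectorial away from the real directions.
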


\begin{proof}
	The result follows immediately from Jordan's lemma, provided it can be shown that $x_0$ and $x_m$ are analytic on $\overline{D_R^+}$ and $\overline{D_R^-}$, respectively.
	The functions $x_0$ and $x_m$ are defined as ratios of entire functions, hence they are analytic except at zeros of their (shared) denominator, $\Delta(\lambda)$.
	The function $\lambda^2\Delta(\lambda)$ is an exponential polynomial with pure-imaginary (hence, in particular, collinear) exponents and has exactly the same nonzero zeros as $\Delta(\lambda)$.
	By~\cite{Lan1931a}, the zeros of $\lambda^2\Delta(\lambda)$ lie within a pair of logarithmic strips about the positive and negative real axes.
	Therefore, provided $\Re(a)>0$, it is possible to choose $R>0$ sufficiently large that $\overline{D_R}$ is disjoint from those logarithmic strips.
\end{proof}

As an immediate corollary of proposition~\ref{prop:uniq.solve.2}, we obtain that the {\em terms involving 
the unknown function $\hat{q}_\tau^r$ do not contribute to the solution representation}.
This proves the following theorem.

\begin{thm}[Heat equation] \label{thm:HeatIMVP}
	For $n=2$, $a=1$, multipoint coefficients $\Msup{b}{k}{j}{r}$ that satisfy the criteria of proposition~\ref{prop:uniq.solve.2}, and sufficiently smooth data, applying the method described above to the initial-multipoint value problem~\eqref{eqn:IMVP} yields an effective integral representation of the solution.
	Indeed, for $R$ sufficiently large, the solution may be represented using equation~\eqref{eqn:q.implicit}, in which the values specified in equations~\eqref{eqn:DtoNMapSolution.n2}, with $Z^\pm=0$, are substituted for the sums of spectral functions.
\end{thm}

Proposition~\ref{prop:uniq.solve.2} relies crucially upon two criteria:
\begin{enumerate}
	\item[(i)]{in the 1\st solution of system~\eqref{eqn:DtoN.2}, all terms involving $\hat{q}_\tau^r$ are $\mathcal{O}(\lambda^{-1})$ as $\lambda\to\infty$ from within $\overline{D_R^+}$, and in the $(mn+1)$\st solution of system~\eqref{eqn:DtoN.2}, all terms involving $\hat{q}_\tau^r$ are $\mathcal{O}(E_m(\lambda)\lambda^{-1})$ as $\lambda\to\infty$ from within $\overline{D_R^-}$,}
	\item[(ii)]{there exists $R>0$ such that $\overline{D_R}$ contains no zeros of $\Delta$.}
\end{enumerate}
The principal tool for determining the validity of criterion~(i) is lemma~\ref{lem:uniq.solve.lem.2}.
As noted in the proof of proposition~\ref{prop:uniq.solve.2}, criterion~(ii) holds provided $\Re(a)>0$. We briefly investigate the consequences of failure of either of criteria~(i) or~(ii).

Criterion~(ii) is clearly violated if $a=\pm i$, i.e.\ if the partial differential equation~\eqref{eqn:IMVP:PDE} is the linear Schr\"{o}dinger equation $iq_t+q_{xx}=0$.
It is possible to weaken~(ii) to allow zeros of $\Delta$ on $\R\subset\partial D$, but in order to exploit this weakening, a more careful analysis of the location of zeros of $\Delta$ is necessary.
It is even possible to allow infinitely many zeros of $\Delta$ in the interior of $D_R$, provided they are sufficiently asymptotically close to the boundary.
This precise condition will be described in forthcoming work for two-point problems, and the multipoint equivalent is similar.
However it is not possible to discard condition~(ii) entirely.

In order to decide criterion~(i) for the linear Schr\"{o}dinger equation, it would also be necessary to extend lemma~\ref{lem:uniq.solve.lem.2} to analyse $\lambda\to\infty$ from within $\overline{\C^\pm}$, as part of $\partial D_R$ lies along $\R$ if $a=\pm i$.
Now suppose that criterion~(i) is false. If $\Delta=0$, then $\mathcal{A}$ is not full rank, which means that the multipoint conditions are not linearly independent (by an extension of the argument in~\cite[section~2.2.1.4]{Smi2011a}), and the problem is certainly ill-posed.
Even if $\Delta\neq0$ the Jordan's lemma argument fails in at least one of $\C^\pm$.
So it appears that~(i) is necessary to obtain an effective integral representation, at least via this method.

\subsection{Example: the 3-point problem~\eqref{typmbc}} \label{ssec:egBastys}

Consider the problem, studied by Bastys, Ivanauskas, and Sapagovas, with $m=2$, $\eta_1=\frac{1}{2}$ and multipoint conditions
\BE
	q(0,t)-c_0q(\tfrac{1}{2},t)=h_0(t), \qquad q(1,t)-c_1q(\tfrac{1}{2},t)=h_1(t).
\EE
These conditions correspond to the choice
\BE
	\Msup{b}{0}{0}{0} = 1, \qquad \Msup{b}{0}{0}{1} = -c_0, \qquad \Msup{b}{0}{1}{1} = -c_1, \qquad \Msup{b}{0}{1}{2} = 1, \qquad \Msup{b}{k}{j}{r} = 0 \mbox{ otherwise.}
\EE
In their paper~\cite{bastys2005}, the authors find an explicit expression for the solution using an ad-hoc method based on separation of variables and Green's functions.
For their approach, they require that $\frac{|c_0+c_1|}{2}\neq 1$.

\medskip 
Applying the general steps above, we find that in this case the matrix $\mathcal{A}$ is given by 
\BE
	\mathcal{A} =
	\BP
		\frac{   1}{2i\lambda}E_0(-\lambda) & 0                                   & -1 &  0 &  0 &  0 \\
		\frac{  -1}{2i\lambda}E_0( \lambda) & 0                                   &  0 & -1 &  0 &  0 \\
		\frac{-c_0}{2i\lambda}E_1(-\lambda) & \frac{-c_1}{2i\lambda}E_1(-\lambda) &  1 &  0 & -1 &  0 \\
		\frac{ c_0}{2i\lambda}E_1( \lambda) & \frac{ c_1}{2i\lambda}E_1( \lambda) &  0 &  1 &  0 & -1 \\
		0                                   & \frac{   1}{2i\lambda}E_2(-\lambda) &  0 &  0 &  1 &  0 \\
		0                                   & \frac{  -1}{2i\lambda}E_2( \lambda) &  0 &  0 &  0 &  1
	\EP,
\EE
and the determinant $\Delta$ is
\BE
	\Delta(\lambda) = \frac{i\sin(\lambda/2)}{\lambda^2} \left[ \frac{c_0+c_1}{2}-\cos(\lambda/2) \right],\quad \lambda\neq 0.
\EE
The zeros of $\Delta(\lambda)$ are 
\BE \label{eqn:egBastys.eigval}
	\lambda_n =\frac {(2n+1)\pi}{2},\quad \mu_n=2 \arccos\left(\frac{c_0+c_1}{2}\right)+2n\pi,\quad n\in\Z.
\EE
Using the code at~\cite{Smi2018b}, the solution is plotted in figures~\ref{figs:eg-plot}, for several choices of $(c_0,c_1)$ with homogenous multipoint conditions and consistent initial datum.

\begin{figure}
    \centering
    \begin{subfigure}[b]{0.4\textwidth}
        \includegraphics[width=\textwidth]{./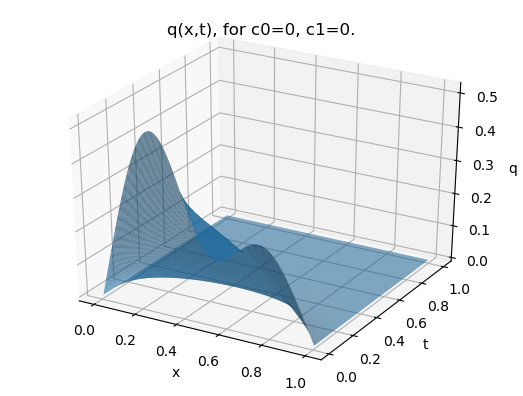}
        \caption{Classical Dirichlet boundary conditions}
        \label{fig:eg-plot-0-0}
    \end{subfigure}
    \hspace{0.1\textwidth}
    \begin{subfigure}[b]{0.4\textwidth}
        \includegraphics[width=\textwidth]{./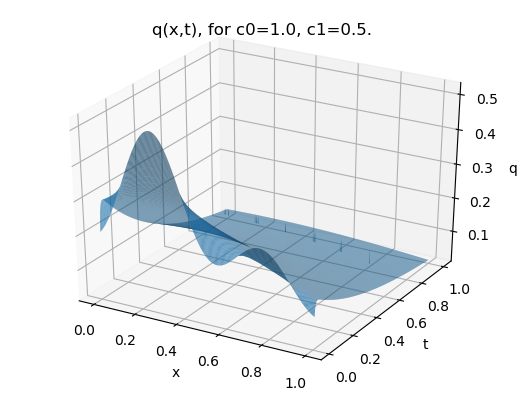}
        \caption{Subcritical multipoint conditions}
        \label{fig:eg-plot-1-0_5}
    \end{subfigure}
	
    \begin{subfigure}[b]{0.4\textwidth}
        \includegraphics[width=\textwidth]{./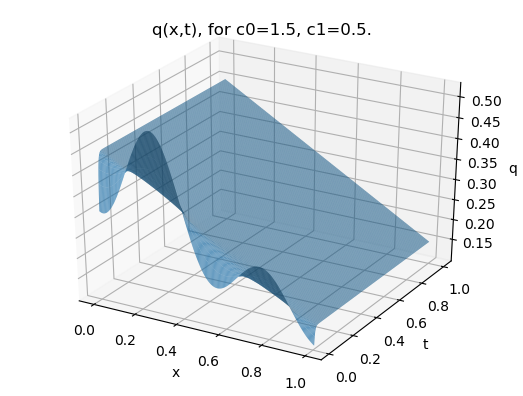}
        \caption{Critical multipoint conditions}
        \label{fig:eg-plot-1_5-0_5}
    \end{subfigure}
    \hspace{0.1\textwidth}
    \begin{subfigure}[b]{0.4\textwidth}
        \includegraphics[width=\textwidth]{./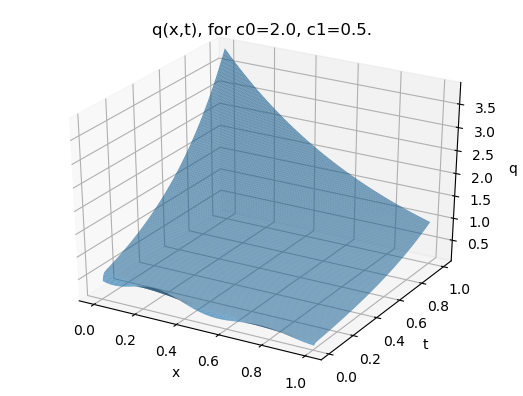}
        \caption{Supercritical multipoint conditions}
        \label{fig:eg-plot-2-0_5}
    \end{subfigure}
	
    \caption{Solution of the 3-point problem~\eqref{typmbc} with initial datum a sum of two box functions, demonstrating the effects of sub/super/critical multipoint conditions on the qualitative behaviour of the solution}\label{figs:eg-plot}
\end{figure}

\begin{rmk} \label{rmk:double.eigval}
	The values such that $\frac{|c_0+c_1|}{2}=1$ correspond to the transition point between real and complex eigenvalues, but also yield the only case in which the eigenvalues are nonsimple.
	The multiplicity of the eigenvalues is the reason that the Green's function approach of~\cite{bastys2005} fails for this case.
	However there is no impediment to using the approach we present.
\end{rmk}

\begin{rmk} \label{rmk:egBastys.SteadyState}
	As suggested by the eigenvalues~\eqref{eqn:egBastys.eigval} and demonstrated by figures~\ref{figs:eg-plot}, the relaxation time of the system increases as the multipoint conditions approach criticality ($c_0+c_1\to2^{-}$), but the steady state solution may be a little surprising for ``Dirichlet-like'' multipoint conditions.
	However, Neumann boundary values may be considered as appropriate limits of Dirichlet-like multipoint conditions which, in view of the below comparison, justifies the phenomenon.
	
	By the usual arguments (see, for example,~\cite[\S2.1.4]{Pin2011a}), the steady state solution must be a linear function $Q(x) = Ax+B$, and it is readily seen that the multipoint conditions imply
	\BES
		A = 2B\left(\frac{c_0-1}{c_0}\right), \qquad A = 2B\left(\frac{c_1-1}{2-c_1}\right).
	\EES
	If $c_0+c_1\neq2$, it follows immediately that $A=B=0$, and the steady state solution is identically zero.
	However, if $c_0+c_1=2$ then this method is insufficient to yield the steady state solution.
	
	This is a manifestation of the effect seen for the classical insulation (homogeneous Neumann) boundary conditions, arising in both cases from the presence of an eigenvalue at $0$.
	For the classical insulation case, after applying the boundary conditions, $A$ is known to be zero, but $B$ is free.
	The usual approach is to either
	\begin{enumerate}
		\item[(a)]{
			appeal to the physics of the problem: by the first law of thermodynamics, the constant steady state temperature must be the mean of the initial temperature profile, or
		}
		\item[(b)]{
			because $v(x,t)=q(x,t)-Q(x)\to0$ as $t\to\infty$ and $v(x,0)=q_0(x)-Q(x)$, and all eigenfunctions but the first decay as $t\to\infty$, it must be that $q_0-Q$ is orthogonal to the first eigenfunction; this yields a new equation for the unknown constant function $Q$.
		}
	\end{enumerate}
	In our method, the contour deformation and residue calculation argument described in~\cite{Smi2011a,Smi2012a,KPPS2017a} may be used to find the contribution of the eigenfunction with eigenvalue at $0$, in terms of the Fourier transform of the initial datum.
	This approach is equivalent to~(b).
	It is not known whether there is a method equivalent to~(a) for this problem; it is not clear that the system should conserve energy.
	
	In the supercritical case $c_0+c_1>2$, the existence of a negative eigenvalue causes the solution to blow up in the long time limit, rather than approach the $Q(x)=0$ steady state solution.
\end{rmk}

\section{The case $n=3$: PDEs of third order}
\subsection{Dirichlet-to-Neumann map}

In the case $n=3$, the linear system~\eqref{eqn:DtoN.n} may be expressed as
\begin{subequations} \label{eqn:DtoN.3}
\begin{multline} \label{eqn:DtoN.3.system}
	\Bigg(\overbrace{E_r(\lambda)\sum_{k=0}^{2}f_{k}^r(\lambda),E_r(\alpha\lambda)\sum_{k=0}^{2}\alpha^{2-k}f_{k}^r(\lambda),E_r(\alpha^{2}\lambda)\sum_{k=0}^{2}\alpha^{2(2-k)}f_{k}^r(\lambda)}^{r=0,1,\ldots,m}\Bigg)\mathcal{A} \\
	= \Big(h_0(\lambda),h_1(\lambda),h_2(\lambda),\overbrace{-\hat{q}_0^r(\lambda),-\hat{q}_0^r(\alpha\lambda),-\hat{q}_0^r(\alpha^2\lambda)}^{r=1,2,\ldots,m}\Big) \\
	+ \re^{a\lambda^n\tau}\hat{q}_\tau^r \Big(0,0,0,\overbrace{\hat{q}_\tau^r(\lambda),\hat{q}_\tau^r(\alpha\lambda),\hat{q}_\tau^r(\alpha^2\lambda)}^{r=1,2,\ldots,m}\Big),
\end{multline}
where
\BE \label{eqn:DtoN.3.A}
	\mathcal{A} =
	\BP
		\Msup{B}{0}{0}{0} & \Msup{B}{0}{1}{0} & \Msup{B}{0}{2}{0} & -1 &  0 &  0 &  0 &  0 &  0 & \cdots & 0 & 0 & 0 \\
		\Msup{B}{1}{0}{0} & \Msup{B}{1}{1}{0} & \Msup{B}{1}{2}{0} &  0 & -1 &  0 &  0 &  0 &  0 & \cdots & 0 & 0 & 0 \\
		\Msup{B}{2}{0}{0} & \Msup{B}{2}{1}{0} & \Msup{B}{2}{2}{0} &  0 &  0 & -1 &  0 &  0 &  0 & \cdots & 0 & 0 & 0 \\
		\Msup{B}{0}{0}{1} & \Msup{B}{0}{1}{1} & \Msup{B}{0}{2}{1} &  1 &  0 &  0 & -1 &  0 &  0 & \cdots & 0 & 0 & 0 \\
		\Msup{B}{1}{0}{1} & \Msup{B}{1}{1}{1} & \Msup{B}{1}{2}{1} &  0 &  1 &  0 &  0 & -1 &  0 & \cdots & 0 & 0 & 0 \\
		\Msup{B}{2}{0}{1} & \Msup{B}{2}{1}{1} & \Msup{B}{2}{2}{1} &  0 &  0 &  1 &  0 &  0 & -1 & \cdots & 0 & 0 & 0 \\
		\Msup{B}{0}{0}{2} & \Msup{B}{0}{1}{2} & \Msup{B}{0}{2}{2} &  0 &  0 &  0 &  1 &  0 &  0 & \cdots & 0 & 0 & 0 \\
		\Msup{B}{1}{0}{2} & \Msup{B}{1}{1}{2} & \Msup{B}{1}{2}{2} &  0 &  0 &  0 &  0 &  1 &  0 & \cdots & 0 & 0 & 0 \\
		\Msup{B}{2}{0}{2} & \Msup{B}{2}{1}{2} & \Msup{B}{2}{2}{2} &  0 &  0 &  0 &  0 &  0 &  1 & \cdots & 0 & 0 & 0 \\
		\vdots & \vdots & \vdots & \vdots & \vdots & \vdots & \vdots & \vdots & \vdots & \ddots & \vdots & \vdots & \vdots \\
		\Msup{B}{0}{0}{m} & \Msup{B}{0}{1}{m} & \Msup{B}{0}{2}{m} &  0 &  0 &  0 &  0 &  0 &  0 & \cdots & 1 & 0 & 0 \\
		\Msup{B}{1}{0}{m} & \Msup{B}{1}{1}{m} & \Msup{B}{1}{2}{m} &  0 &  0 &  0 &  0 &  0 &  0 & \cdots & 0 & 1 & 0 \\
		\Msup{B}{2}{0}{m} & \Msup{B}{2}{1}{m} & \Msup{B}{2}{2}{m} &  0 &  0 &  0 &  0 &  0 &  0 & \cdots & 0 & 0 & 1
	\EP,
\EE
and, for $k\in\{0,1,2\}$, $r\in\{0,\ldots,m\}$,
\begin{align} \label{eqn:DtoN.3.B0}
	\Msup{B}{0}{k}{r} (\la)&= \frac{1}{3}E_r(-\lambda)\left[-\frac{ \Msup{b}{0}{k}{r}}{\lambda^2} +\frac{ \Msup{b}{1}{k}{r}}{i\lambda} + \Msup{b}{2}{k}{r}\right], \\ \label{eqn:DtoN.3.B1}
	\Msup{B}{1}{k}{r}(\la) &= \frac{1}{3}E_r(-\alpha\lambda)\left[-\frac{\alpha \Msup{b}{0}{k}{r}}{\lambda^2} + \frac{\alpha^2\Msup{b}{1}{k}{r}}{i\lambda} + \Msup{b}{2}{k}{r}\right]=\Msup{B}{0}{k}{r} (\alpha\la), \\ \label{eqn:DtoN.3.B2}
	\Msup{B}{2}{k}{r} (\la)&= \frac{1}{3}E_r(-\alpha^2\lambda)\left[ -\frac{\alpha^2\Msup{b}{0}{k}{r}}{\lambda^2} + \frac{\alpha\Msup{b}{1}{k}{r}}{i\lambda} + \Msup{b}{2}{k}{r}\right]=\Msup{B}{0}{k}{r} (\alpha^2\la).
\end{align}
\end{subequations}
So $\mathcal{A}$ is the matrix with $1$ on the diagonal, $-1$ on the third super-diagonal, and $0$ elsewhere, with the first three columns replaced as shown.

\bigskip

In the following lemma, the index $s$ is understood to be an element of the group $\Z_3=\{0,1,2\}$ of integers modulo 3.
The proof is presented in appendix~\ref{sec:AppA}. 

\begin{lem} \label{lem:DtoN.soln.3}
	\textup{\textbf{(a)}} The linear system
	\BE \label{eqn:DtoN.solnlem.3:system}
		\Big(\overbrace{x_0^r,x_1^r,x_2^r}^{r=0,1,\ldots,m}\Big)\mathcal{A} = \Big(0,0,0,\overbrace{y_0^r,y_1^r,y_2^r}^{r=1,2,\ldots,m}\Big),
	\EE
	with $\mathcal{A}$ given by equation~\eqref{eqn:DtoN.3.A} has solution
	\begin{multline} \label{eqn:DtoN.solnlem.3:x_s^r}
		x_s^r = \frac{1}{\Delta(\lambda)}
			\Bigg[
				\sum_{j_0,j_1,j_2=0}^m \mathcal{D}(j_0,j_1,j_2) L(j_s,r,s) \\
				+ \frac{1}{2}\sum_{j_0,j_1,j_2=0}^m \M{\mathcal{D}}{s}{s+1}(j_0,j_1,j_2) L(j_s,j_{s+1},s+1) \\
				+ \frac{1}{2}\sum_{j_0,j_1,j_2=0}^m \M{\mathcal{D}}{s}{s+2}(j_0,j_1,j_2) L(j_s,j_{s+2},s+2)
			\Bigg],
	\end{multline}
	where
	\begin{align} \label{eqn:DtoN.solnlem.3:Delta}
		\Delta(\lambda) &= \sum_{j_0,j_1,j_2=0}^m \mathcal{D}(j_0,j_1,j_2), \\
		 \label{eqn:DtoN.solnlem.3:L}
		 L(j_1,j_2,s) &= \left\{\begin{smallmatrix}1\text{\textup{ if }}j_1<j_2 \\ 0\text{\textup{ if }}j_1=j_2 \\ -1\text{\textup{ if }}j_1>j_2 \end{smallmatrix}\right\} \sum_{k=\min\{j_1,j_2\}+1}^{\max\{j_1,j_2\}} y_s^k(\lambda), \\
		 \label{eqn:DtoN.solnlem.3:D}
		 \mathcal{D}(j_0,j_1,j_2) &= \det
			\BP
				\Msup{B}{0}{0}{j_0} & \Msup{B}{0}{1}{j_0} & \Msup{B}{0}{2}{j_0} \\
				\Msup{B}{1}{0}{j_1} & \Msup{B}{1}{1}{j_1} & \Msup{B}{1}{2}{j_1} \\
				\Msup{B}{2}{0}{j_2} & \Msup{B}{2}{1}{j_2} & \Msup{B}{2}{2}{j_2}
			\EP,
	\end{align}
	and $\M{\mathcal{D}}{s}{p}(j_0,j_1,j_2)$ is the matrix $\mathcal{D}(j_0,j_1,j_2)$ with the row $(\Msup{B}{s}{0}{j_s}, \Msup{B}{s}{1}{j_s}, \Msup{B}{s}{2}{j_s})$ replaced by $(\Msup{B}{p}{0}{j_s}, \Msup{B}{p}{1}{j_s}, \Msup{B}{p}{2}{j_s})$.
	
	\smallskip
	\noindent \textup{\textbf{(b)}} The linear system
	\BE \label{eqn:DtoN.solnlem.3h:system}
		\Big(\overbrace{x_0^r,x_1^r,x_2^r}^{r=0,1,\ldots,m}\Big)\mathcal{A} = \Big(h_0,h_1,h_2,\overbrace{0,0,0}^{\mathclap{r=1,2,\ldots,m}}\Big),
	\EE
	with $\mathcal{A}$ given by equation~\eqref{eqn:DtoN.3.A} has solution
	\BE \label{eqn:DtoN.solnlem.3h:x_s^r}
		x_s^r(\lambda) = \frac{1}{\Delta(\lambda)}\sum_{j_0,j_1=0}^m \left( h_s\Msups{\mathcal{C}}{s+1}{s+2}{s+1}{s+2}(j_0,j_1) + h_{s+1}\Msups{\mathcal{C}}{s+2}{s}{s+1}{s+2}(j_0,j_1) + h_{s+2}\Msups{\mathcal{C}}{s}{s+1}{s+1}{s+2}(j_0,j_1) \right),
	\EE
	where $\Delta(\lambda)$ is given by equation~\eqref{eqn:DtoN.solnlem.3:Delta}, and $\mathcal{C}$ are the boundary coefficient minors
	\BE	\label{eqn:DtoN.solnlem.3.3}
		\Msups{\mathcal{C}}{p}{q}{u}{v}(j_0,j_1) = \det 
		\BP
			\Msup{B}{u}{p}{j_0} & \Msup{B}{u}{q}{j_0} \\
			\Msup{B}{v}{p}{j_1} & \Msup{B}{v}{q}{j_1}		
		\EP.
	\EE
\end{lem}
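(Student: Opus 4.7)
The strategy parallels the proof of Lemma~\ref{lem:DtoN.soln.2}, exploiting the bidiagonal block structure of $\mathcal{A}$. The last $3m$ columns of $\mathcal{A}$ form a sparse-identity pattern whose associated equations read $x_s^r - x_s^{r-1} = (\mbox{RHS})^r_s$ for each $s\in\{0,1,2\}$ and $r\in\{1,\ldots,m\}$. Telescoping these yields $x_s^r = x_s^0 + S_s^r$, where $S_s^r := \sum_{r'=1}^{r} y_s^{r'}$ in part~(a) and $S_s^r = 0$ in part~(b). The problem thus reduces to solving a $3\times 3$ linear system for the initial triple $(x_0^0, x_1^0, x_2^0)$, obtained by substituting the telescoped expressions into the first three equations (those associated with the $B$-block columns of $\mathcal{A}$).

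For part~(b), the reduced system is $(x_0^0, x_1^0, x_2^0)\tilde B = (h_0, h_1, h_2)$ with $\tilde B_{kj} := \sum_{r=0}^{m}\Msup{B}{k}{j}{r}$. I would apply Cramer's rule: multilinearity on each of the three rows of $\tilde B$ expands $\det\tilde B$ into the triple sum $\sum_{j_0,j_1,j_2}\mathcal{D}(j_0,j_1,j_2)=\Delta(\lambda)$. For the numerator, I would cofactor-expand along the row replaced by $(h_0,h_1,h_2)$ and then apply multilinearity in the two surviving rows, producing double sums over $j_0,j_1$ of the $2\times 2$ minors $\mathcal{C}$. The alternating cofactor signs combine with the row/column orderings encoded in $\Msups{\mathcal{C}}{p}{q}{u}{v}$ to yield the cyclic column-index pattern $(s+1,s+2)$, $(s+2,s)$, $(s,s+1)$ appearing in~\eqref{eqn:DtoN.solnlem.3h:x_s^r}.

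For part~(a), substituting the telescoped expressions into the first three equations gives a $3\times 3$ Cramer system for $(x_0^0, x_1^0, x_2^0)$ whose right-hand side is, up to sign, a double sum over $(k',r')$ of $S_{k'}^{r'}$-weighted rows of the $B$-blocks. Cramer's rule gives $x_s^0=\det M_s/\Delta$, and expanding $\det M_s$ by multilinearity---first in the replaced row $s$, then in the two remaining rows---produces $-\sum_{k'=0}^{2}\sum_{j_0,j_1,j_2=0}^{m}S_{k'}^{j_s}D_{s,k'}(j_0,j_1,j_2)$, where $D_{s,s}=\mathcal{D}$ and $D_{s,k'}=\M{\mathcal{D}}{s}{k'}$ for $k'\neq s$ directly from the definition of $\M{\mathcal{D}}{s}{k'}$. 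Adding the telescoping contribution $S_s^r$ and using the elementary identity $S_s^{j_s} = S_s^r - L(j_s, r, s)$, the $k'=s$ summand collapses (after cancellation of the $S_s^r$ pieces) into the first, $\mathcal{D}$-weighted term in formula~\eqref{eqn:DtoN.solnlem.3:x_s^r}.

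The main obstacle is then to reshape the $k'\in\{s+1, s+2\}$ contributions, naively of single-sided form $-\sum_{j_0,j_1,j_2}S_{s+k}^{j_s}\M{\mathcal{D}}{s}{s+k}$ for $k\in\{1,2\}$, into the symmetric $L$-form of the lemma. The decisive observation is that $\M{\mathcal{D}}{s}{s+k}(j_0,j_1,j_2)$ is \emph{antisymmetric} under the swap $j_s\leftrightarrow j_{s+k}$: after this swap, rows $s$ and $s+k$ of the $3\times 3$ determinant both carry frequency-index-$(s{+}k)$ entries with interchanged boundary superscripts, so two rows are interchanged and the sign flips. Relabelling summation indices using this antisymmetry gives
\begin{equation*}
\sum_{j_0,j_1,j_2} S_{s+k}^{j_s}\M{\mathcal{D}}{s}{s+k}(j_0,j_1,j_2) = -\sum_{j_0,j_1,j_2} S_{s+k}^{j_{s+k}}\M{\mathcal{D}}{s}{s+k}(j_0,j_1,j_2),
\end{equation*}
whence $-\sum S_{s+k}^{j_s}\M{\mathcal{D}}{s}{s+k} = \tfrac{1}{2}\sum (S_{s+k}^{j_{s+k}} - S_{s+k}^{j_s})\M{\mathcal{D}}{s}{s+k} = \tfrac{1}{2}\sum L(j_s, j_{s+k}, s+k)\M{\mathcal{D}}{s}{s+k}$. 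Applying this for $k=1$ and $k=2$ yields the remaining two terms in~\eqref{eqn:DtoN.solnlem.3:x_s^r}, completing the derivation.
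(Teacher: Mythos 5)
Your proposal is correct, but it takes a genuinely different route from the paper's. The paper proves lemma~\ref{lem:DtoN.soln.3} by \emph{direct verification}: it takes the formulas~\eqref{eqn:DtoN.solnlem.3:x_s^r} and~\eqref{eqn:DtoN.solnlem.3h:x_s^r} as given, substitutes them into the two families of equations ($x_s^r-x_s^{r-1}=y_s^r$ and $\sum_{r,s}\Msup{B}{s}{p}{r}x_s^r=0$ or $h_p$), and shows the equations hold by expanding $\mathcal{D}$ and $\M{\mathcal{D}}{s}{p}$ in their minors and cancelling terms via index permutations together with the antisymmetries~\eqref{eqn:DtoN.solnlem.3.2} and~\eqref{eqn:DtoN.solnlem.3.4}; for the analogous $n=2$ lemma the authors explicitly decline the constructive route as ``somewhat complex.'' You instead \emph{derive} the solution: telescoping the last $3m$ equations to $x_s^r=x_s^0+S_s^r$ reduces the whole problem to a $3\times3$ Cramer system for $(x_0^0,x_1^0,x_2^0)$, which sidesteps the large-determinant bookkeeping (and the auxiliary lemma~\ref{lem:detlem}) that makes the naive Cramer approach on the full $3(m+1)\times3(m+1)$ matrix unwieldy. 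Your decisive step --- the antisymmetry of $\M{\mathcal{D}}{s}{s+k}(j_0,j_1,j_2)$ under $j_s\leftrightarrow j_{s+k}$, used to symmetrize the one-sided sums $-\sum S_{s+k}^{j_s}\M{\mathcal{D}}{s}{s+k}$ into the $L$-form --- is correct (the swap interchanges rows $s$ and $s+k$ of the determinant) and is essentially the mirror image of the permutation identities the paper uses for cancellation. Your route buys an explanation of where the formulas come from, plus uniqueness of the solution when $\Delta\neq0$; the paper's verification is shorter to write in full once the formulas are known. Two small points you should make explicit in a complete write-up: Cramer's rule requires $\Delta=\det\tilde B\neq0$ (implicit in the lemma, since $\Delta$ appears in every denominator), and the identity $L(j_1,j_2,s)=S_s^{j_2}-S_s^{j_1}$ that underlies both of your collapsing steps.
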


Rather than  deriving general conditions analogous to the ones found in section 5.2,  that guarantee that the terms involving the unknown function $\hat q^r_\tau(\la)$, we will now compute an explicit example.  

\subsection{An explicit example: a 3-point case for $n=3$}
Consider the initial-multipoint value problem for the PDE $q_t+q_{xxx}=0$ ($n=3$, $a=-i$) obtained by setting $m=2$, $\eta_1=\frac{1}{2}$ and prescribing the three multipoint conditions 
\BE
	q(0,t)-cq(\tfrac{1}{2},t)=0, \qquad q(1,t)=0,\qquad q_x(1,t)=0.
\EE
This example is not motivated by any application of which we are aware, but it is a natural third order analogue of the example considered in section~\ref{ssec:egBastys}.
In addition, these particular conditions are the simplest generalization of the boundary conditions $q(0,t)=q(1,t)=q_x(1,t)=0$, which are known to destroy the self-adjoint structure of the spatial operator.
Hence they are interesting from the purely mathematical point of view, and for this reason we include this as our example.

The given conditions correspond to the choice
\BES
	\BP
		\Msup{b}{0}{0}{0} & \Msup{b}{0}{1}{0} & \Msup{b}{0}{2}{0} \\
		\Msup{b}{1}{0}{0} & \Msup{b}{1}{1}{0} & \Msup{b}{1}{2}{0} \\
		\Msup{b}{2}{0}{0} & \Msup{b}{2}{1}{0} & \Msup{b}{2}{2}{0} \\
		\Msup{b}{0}{0}{1} & \Msup{b}{0}{1}{1} & \Msup{b}{0}{2}{1} \\
		\Msup{b}{1}{0}{1} & \Msup{b}{1}{1}{1} & \Msup{b}{1}{2}{1} \\
		\Msup{b}{2}{0}{1} & \Msup{b}{2}{1}{1} & \Msup{b}{2}{2}{1} \\
		\Msup{b}{0}{0}{2} & \Msup{b}{0}{1}{2} & \Msup{b}{0}{2}{2} \\
		\Msup{b}{1}{0}{2} & \Msup{b}{1}{1}{2} & \Msup{b}{1}{2}{2} \\
		\Msup{b}{2}{0}{2} & \Msup{b}{2}{1}{2} & \Msup{b}{2}{2}{2}
	\EP
	=
	\BP
		1  & 0 & 0 \\ 0 & 0 & 0 \\ 0 & 0 & 0 \\
		-c & 0 & 0 \\ 0 & 0 & 0 \\ 0 & 0 & 0 \\
		0  & 1 & 0 \\ 0 & 0 & 1 \\ 0 & 0 & 0
	\EP,
	\qquad
	\BP
		\Msup{B}{0}{0}{0} & \Msup{B}{0}{1}{0} & \Msup{B}{0}{2}{0} \\
		\Msup{B}{1}{0}{0} & \Msup{B}{1}{1}{0} & \Msup{B}{1}{2}{0} \\
		\Msup{B}{2}{0}{0} & \Msup{B}{2}{1}{0} & \Msup{B}{2}{2}{0} \\
		\Msup{B}{0}{0}{1} & \Msup{B}{0}{1}{1} & \Msup{B}{0}{2}{1} \\
		\Msup{B}{1}{0}{1} & \Msup{B}{1}{1}{1} & \Msup{B}{1}{2}{1} \\
		\Msup{B}{2}{0}{1} & \Msup{B}{2}{1}{1} & \Msup{B}{2}{2}{1} \\
		\Msup{B}{0}{0}{2} & \Msup{B}{0}{1}{2} & \Msup{B}{0}{2}{2} \\
		\Msup{B}{1}{0}{2} & \Msup{B}{1}{1}{2} & \Msup{B}{1}{2}{2} \\
		\Msup{B}{2}{0}{2} & \Msup{B}{2}{1}{2} & \Msup{B}{2}{2}{2}
	\EP
	=
	\BP
		\frac{-1}{3\lambda^2} & 0 & 0 \\
		\frac{-\alpha}{3\lambda^2} & 0 & 0 \\
		\frac{-\alpha^2}{3\lambda^2} & 0 & 0 \\
		\frac{c}{3\lambda^2}\re^{i\lambda/2} & 0 & 0 \\
		\frac{\alpha c}{3\lambda^2}\re^{i\alpha\lambda/2} & 0 & 0 \\
		\frac{\alpha^2 c}{3\lambda^2}\re^{i\alpha^2\lambda/2} & 0 & 0 \\
		0 & \frac{-1}{3\lambda^2}\re^{i\lambda} & \frac{1}{3i\lambda}\re^{i\lambda} \\
		0 & \frac{-\alpha}{3\lambda^2}\re^{i\alpha\lambda} & \frac{\alpha^2}{3i\lambda}\re^{i\alpha\lambda} \\
		0 & \frac{-\alpha^2}{3\lambda^2}\re^{i\alpha^2\lambda} & \frac{\alpha}{3i\lambda}\re^{i\alpha^2\lambda}
	\EP.
\EES
It follows that
\BES
	\Delta(\lambda) = \frac{\alpha^2-\alpha}{27i\lambda^5}\sum_{k=0}^2\alpha^k\left( \re^{-i\alpha^k\lambda}-c\re^{-i\alpha^k\lambda/2} \right),
\EES
and, applying lemma~\ref{lem:DtoN.soln.3}(a) with $y_s^r(\lambda)=\hat{q}_\tau^r(\alpha^s\lambda)$,
\begin{align*}
	x_0^0(\lambda) &=
		\frac{\alpha^2-\alpha}{27i\lambda^5\Delta(\lambda)} \Bigg[ -c\re^{-i\lambda/2}\hat{q}_\tau^1(\lambda) \\
		&\phantom{=a{}} - \left\{ \alpha\left(\re^{-i\alpha\lambda}-c\re^{-i\alpha\lambda/2}\right) + \alpha^2\left(\re^{-i\alpha^2\lambda}-c\re^{-i\alpha^2\lambda/2}\right) \right\}\left( \hat{q}_\tau^1(\lambda)+\hat{q}_\tau^2(\lambda) \right) \\
		&\phantom{=a{}} + \re^{-i\lambda} \left\{ \alpha\left(\hat{q}_\tau^1(\alpha\lambda)+\left(1-c\re^{i\alpha\lambda/2}\right)\hat{q}_\tau^2(\alpha\lambda)\right) + \alpha^2\left(\hat{q}_\tau^1(\alpha^2\lambda)+\left(1-c\re^{i\alpha^2\lambda/2}\right)\hat{q}_\tau^2(\alpha^2\lambda)\right) \right\} \Bigg], \\
	x_0^2(\lambda) &=
		\frac{\alpha^2-\alpha}{27i\lambda^5\Delta(\lambda)} \re^{-i\lambda} \sum_{k=0}^2 \alpha^k \left( \hat{q}_\tau^1(\alpha^k\lambda) + \left(1-c\re^{i\alpha^k\lambda/2}\right)\hat{q}_\tau^2(\alpha^k\lambda) \right).
\end{align*}
From here, a simple asymptotic analysis yields
\begin{alignat*}{3}
	x_0^0(\lambda) &= \mathcal{O}(\lambda^{-1}) & \mbox{ as } \lambda &\to \infty \mbox{ from within } & &\overline{D_R^+}, \\
	x_0^2(\lambda) &= \mathcal{O}(\re^{-i\lambda}\lambda^{-1}) & \mbox{ as } \lambda &\to \infty \mbox{ from within } & &\overline{D_R^-}.
\end{alignat*}
This asymptotic result plays the role of lemma~\ref{lem:uniq.solve.lem.2}, which enables a Jordan's lemma argument of the type found in the proof of proposition~\ref{prop:uniq.solve.2}.
In this case, the theory of zeros of exponential sums~\cite{Lan1931a} immediately yields that, for $R>0$ sufficiently large, $\Delta$ has no zeros in $\overline{D_R}$.
Thus the terms involving $\hat{q}_\tau^r$ make no contribution to the solution representation.

\begin{rmk}
We stress that the above result holds if and only if $a=-i$; if and only if the partial differential equation is
\BES
	q_t+q_{xxx}=0.
\EES
Choosing $a=i$ with the same multipoint conditions specifies an ill-posed problem.
This is in accordance with the results of~\cite{Pel2005a}, in which the above example is studied for $c=0$, and it is shown that the problem is well-posed for $a=-i$ only.
\end{rmk}

\section{Conclusions}
We have derived explicit formulae for the solution of a large class of nonlocal boundary value problems, by formulating them as multipoint value problems and generalising the machinery of the Fokas transform to multipoint value problems.
While we have derived the set-up for linear PDEs of arbitrary order, the formulae have been derived explicitly for PDEs of order $n=2$ and $n=3$.

Even though the general expression is quite cumbersome, the derivation is algorithmic, and the final result fully explicit.
In specific examples, it is possible to simply apply these formulae to find a general expression for the solution.
This expression is always in the form of a uniformly convergent integral that can be evaluated numerically in a very efficient manner by numerical complex integration, taking advantage of the analyticity properties of the solution to select an integration contour of fast decay, as done in~\cite{FF2008a,KPPS2017a}.
 
We have studied also some specific examples of second and third order problems, and given criteria that guarantee that the solution representation depends effectively only on the given data of the problem, at least for $n=2$. 
The full characterisation of multipoint conditions that yield well posed problems, and the analysis of the cases when the integral representation is equivalent to a series representation, are left for a future publication. 
 
\appendix

\section{Appendix: Proof of Lemma~\ref{lem:DtoN.soln.2} and Lemma~\ref{lem:DtoN.soln.3}} \label{sec:AppA}

\subsection{Linear system for $n=2$}

\begin{proof}[Proof of lemma~\ref{lem:DtoN.soln.2}(a)]
A constructive proof may be obtained via Cramer's rule, Laplace's formula and lemma~\ref{lem:detlem}.
However the argument is somewhat complex.
Instead, we present a direct verification of the validity of the solution.
It must be shown that, for each $r\in\{1,\ldots,m\}$,
\begin{align} \label{eqn:DtoN.solnlem.2.proof:1}
	x_r-x_{r-1} &= y_r, \\ \label{eqn:DtoN.solnlem.2.proof:2}
	X_r-X_{r-1} &= Y_r,
\end{align}
and that, for each $s\in\{0,1\}$,
\BE \label{eqn:DtoN.solnlem.2.proof:3}
	\Delta(\lambda) \sum_{r=0}^m \left( \Msup{B}{0}{s}{r}x_r(\lambda) + \Msup{B}{1}{s}{r}X_r(\lambda) \right) = 0,
\EE
with $x_r$, $X_r$ defined by equations~\eqref{eqn:DtoN.solnlem.2:x_r} and~\eqref{eqn:DtoN.solnlem.2:X_r}.

Fixing $r\in\{1,\ldots,m\}$, it is immediate that
\begin{multline} \label{eqn:DtoN.solnlem.2.proof:4}
	x_r-x_{r-1} = \frac{1}{\Delta(\lambda)} \sum_{j,k=0}^m  \left[\Msup{B}{0}{0}{j}\Msup{B}{1}{1}{k}-\Msup{B}{1}{0}{k}\Msup{B}{0}{1}{j}\right] \\
	\Bigg( \left\{\begin{smallmatrix}1\text{\textup{ if }}j<r \\ 0\text{\textup{ if }}j=r \\ -1\text{\textup{ if }}j>r \end{smallmatrix}\right\} \sum_{l=\min\{j,r\}+1}^{\max\{j,r\}} y_l(\lambda)
	- \left\{\begin{smallmatrix}1\text{\textup{ if }}j<r-1 \\ 0\text{\textup{ if }}j=r-1 \\ -1\text{\textup{ if }}j>r-1 \end{smallmatrix}\right\} \sum_{l=\min\{j,r-1\}+1}^{\max\{j,r-1\}} y_l(\lambda)\Bigg).
\end{multline}
Note that the latter term in expression~\eqref{eqn:DtoN.solnlem.2:x_r} is independent of $r$, so the corresponding terms from $x_{r-1}$ and $x_r$ cancel.
A case-by-case ($j<r-1$, $j=r-1$, $j=r$, $j>r$) evaluation yields that the parenthetical quantity in equation~\eqref{eqn:DtoN.solnlem.2.proof:4} evaluates to $y_r(\lambda)$ for all $j$.
A substitution of representation~\eqref{eqn:DtoN.solnlem.2:Delta} for $\Delta(\lambda)$, and cancellation, completes the proof of equation~\eqref{eqn:DtoN.solnlem.2.proof:1}.
The proof of equation~\eqref{eqn:DtoN.solnlem.2.proof:2} is very similar.

With $s=0$, the left hand side of equation~\eqref{eqn:DtoN.solnlem.2.proof:3} expands to
\begin{multline} \label{eqn:DtoN.solnlem.2.proof:5}
	\sum_{r,j,k=0}^m  \Msup{B}{0}{0}{r} \left[\Msup{B}{0}{0}{j}\Msup{B}{1}{1}{k}-\Msup{B}{1}{0}{k}\Msup{B}{0}{1}{j}\right] \left\{\begin{smallmatrix}1\text{\textup{ if }}j<r \\ 0\text{\textup{ if }}j=r \\ -1\text{\textup{ if }}j>r \end{smallmatrix}\right\} \sum_{l=\min\{j,r\}+1}^{\max\{j,r\}} y_l(\lambda) \\
	- \frac{1}{2} \sum_{r,j,k=0}^m  \Msup{B}{1}{0}{r} \left[\Msup{B}{0}{0}{j}\Msup{B}{0}{1}{k}-\Msup{B}{0}{0}{k}\Msup{B}{0}{1}{j}\right] \left\{\begin{smallmatrix}1\text{\textup{ if }}j<k \\ 0\text{\textup{ if }}j=k \\ -1\text{\textup{ if }}j>k \end{smallmatrix}\right\} \sum_{l=\min\{j,k\}+1}^{\max\{j,k\}} y_l(\lambda) \\
	+ \sum_{r,j,k=0}^m  \Msup{B}{1}{0}{r} \left[\Msup{B}{0}{0}{j}\Msup{B}{1}{1}{k}-\Msup{B}{1}{0}{k}\Msup{B}{0}{1}{j}\right] \left\{\begin{smallmatrix}1\text{\textup{ if }}k<r\vphantom{j} \\ 0\text{\textup{ if }}k=r\vphantom{j} \\ -1\text{\textup{ if }}k>r\vphantom{j} \end{smallmatrix}\right\} \sum_{l=\min\{k,r\}+1}^{\max\{k,r\}} Y_l(\lambda) \\
	+ \frac{1}{2} \sum_{r,j,k=0}^m  \Msup{B}{0}{0}{r} \left[\Msup{B}{1}{0}{j}\Msup{B}{1}{1}{k}-\Msup{B}{1}{0}{k}\Msup{B}{1}{1}{j}\right] \left\{\begin{smallmatrix}1\text{\textup{ if }}j<k \\ 0\text{\textup{ if }}j=k \\ -1\text{\textup{ if }}j>k \end{smallmatrix}\right\} \sum_{l=\min\{j,k\}+1}^{\max\{j,k\}} Y_l(\lambda).
\end{multline}
Informed by the fact that $y_l$, $Y_l$ are linearly independent, we aim to show that the third and fourth terms of expression~\eqref{eqn:DtoN.solnlem.2.proof:5} cancel.
Adding and subtracting $\Msup{B}{1}{0}{j}\Msup{B}{1}{0}{k}\Msup{B}{0}{1}{r}$, it holds that
\BE \label{eqn:DtoN.solnlem.2.proof:6}
	\Msup{B}{0}{0}{r}\left[\Msup{B}{1}{0}{j}\Msup{B}{1}{1}{k}-\Msup{B}{1}{0}{k}\Msup{B}{1}{1}{j}\right]
	=
	\Msup{B}{1}{0}{j}\left[\Msup{B}{0}{0}{r}\Msup{B}{1}{1}{k}-\Msup{B}{1}{0}{k}\Msup{B}{0}{1}{r}\right]
	-
	\Msup{B}{1}{0}{k}\left[\Msup{B}{0}{0}{r}\Msup{B}{1}{1}{j}-\Msup{B}{1}{0}{j}\Msup{B}{0}{1}{r}\right].
\EE
Hence the fourth term of expression~\eqref{eqn:DtoN.solnlem.2.proof:5} is equal to
\begin{multline} \label{eqn:DtoN.solnlem.2.proof:7}
	\frac{1}{2} \Bigg(
		\sum_{r,j,k=0}^m  \Msup{B}{1}{0}{j}\left[\Msup{B}{0}{0}{r}\Msup{B}{1}{1}{k}-\Msup{B}{1}{0}{k}\Msup{B}{0}{1}{r}\right] \left\{\begin{smallmatrix}1\text{\textup{ if }}j<k \\ 0\text{\textup{ if }}j=k \\ -1\text{\textup{ if }}j>k \end{smallmatrix}\right\} \sum_{l=\min\{j,k\}+1}^{\max\{j,k\}} Y_l(\lambda) \\
		-\sum_{r,j,k=0}^m  \Msup{B}{1}{0}{k}\left[\Msup{B}{0}{0}{r}\Msup{B}{1}{1}{j}-\Msup{B}{1}{0}{j}\Msup{B}{0}{1}{r}\right] \left\{\begin{smallmatrix}1\text{\textup{ if }}j<k \\ 0\text{\textup{ if }}j=k \\ -1\text{\textup{ if }}j>k \end{smallmatrix}\right\} \sum_{l=\min\{j,k\}+1}^{\max\{j,k\}} Y_l(\lambda)
	\Bigg).
\end{multline}
Applying the permutation $(j,r,k)\mapsto(r,j,k)$ to the indices in the first sum of expression~\eqref{eqn:DtoN.solnlem.2.proof:7}, and the permutation $(k,r,j)\mapsto(r,j,k)$ to the indices in the second sum of expression~\eqref{eqn:DtoN.solnlem.2.proof:7}, we obtain
\begin{multline} \label{eqn:DtoN.solnlem.2.proof:8}
	\frac{1}{2} \Bigg(
		\sum_{r,j,k=0}^m  \Msup{B}{1}{0}{r} \left[\Msup{B}{0}{0}{j}\Msup{B}{1}{1}{k}-\Msup{B}{1}{0}{k}\Msup{B}{0}{1}{j}\right] \left\{\begin{smallmatrix}1\text{\textup{ if }}r<k\vphantom{j} \\ 0\text{\textup{ if }}r=k\vphantom{j} \\ -1\text{\textup{ if }}r>k\vphantom{j} \end{smallmatrix}\right\} \sum_{l=\min\{r,k\}+1}^{\max\{r,k\}} Y_l(\lambda) \\
		-\sum_{r,j,k=0}^m  \Msup{B}{1}{0}{r} \left[\Msup{B}{0}{0}{j}\Msup{B}{1}{1}{k}-\Msup{B}{1}{0}{k}\Msup{B}{0}{1}{j}\right] \left\{\begin{smallmatrix}1\text{\textup{ if }}k<r\vphantom{j} \\ 0\text{\textup{ if }}k=r\vphantom{j} \\ -1\text{\textup{ if }}k>r\vphantom{j} \end{smallmatrix}\right\} \sum_{l=\min\{k,r\}+1}^{\max\{k,r\}} Y_l(\lambda)
	\Bigg),
\end{multline}
which, by
\BES
	\left\{\begin{smallmatrix}1\text{\textup{ if }}r<k\vphantom{j} \\ 0\text{\textup{ if }}r=k\vphantom{j} \\ -1\text{\textup{ if }}r>k\vphantom{j} \end{smallmatrix}\right\} = - \left\{\begin{smallmatrix}1\text{\textup{ if }}k<r\vphantom{j} \\ 0\text{\textup{ if }}k=r\vphantom{j} \\ -1\text{\textup{ if }}k>r\vphantom{j} \end{smallmatrix}\right\},
\EES
is equal to
\BES
	-\sum_{r,j,k=0}^m  \Msup{B}{1}{0}{r} \left[\Msup{B}{0}{0}{j}\Msup{B}{1}{1}{k}-\Msup{B}{1}{0}{k}\Msup{B}{0}{1}{j}\right] \left\{\begin{smallmatrix}1\text{\textup{ if }}k<r\vphantom{j} \\ 0\text{\textup{ if }}k=r\vphantom{j} \\ -1\text{\textup{ if }}k>r\vphantom{j} \end{smallmatrix}\right\} \sum_{l=\min\{k,r\}+1}^{\max\{k,r\}} Y_l(\lambda).
\EES
Hence the third and fourth terms of expression~\eqref{eqn:DtoN.solnlem.2.proof:5} cancel.
Similarly, the first and second terms in~\eqref{eqn:DtoN.solnlem.2.proof:5} cancel.
This completes the proof of equation~\eqref{eqn:DtoN.solnlem.2.proof:3} for $s=0$.
A similar argument yields equation~\eqref{eqn:DtoN.solnlem.2.proof:3} for $s=1$.
\end{proof}

\begin{proof}[Proof of lemma~\ref{lem:DtoN.soln.2}(b)]
The definitions~\eqref{eqn:DtoN.solnlem.2h:x_r},~\eqref{eqn:DtoN.solnlem.2h:X_r} of $x_r$ and $X_r$ are independent of $r$, so they must satisfy $x_r-x_{r-1}=0=X_r-X_{r-1}$.
A simple evaluation establishes that
\BES
	\sum_{r=0}^m \left( \Msup{B}{0}{0}{r}x_r + \Msup{B}{1}{0}{r}X_r \right) = h_0, \qquad \sum_{r=0}^m \left( \Msup{B}{0}{1}{r}x_r + \Msup{B}{1}{1}{r}X_r \right) = h_1.
	\qedhere
\EES
\end{proof}

\subsection{Linear system for $n=3$}

\begin{proof}[Proof of lemma~\ref{lem:DtoN.soln.3}(a)]
We begin by noting some useful identities.
Evaluating the definition~\eqref{eqn:DtoN.solnlem.3:L} of $L$, at each of $j_s<r-1$, $j_s=r-1$, $j_s=r$, and $j_s>r$,
\BE	\label{eqn:DtoN.solnlem.3.1}
	L(j_s,r,s)-L(j_s,r-1,s)=y_s^r.
\EE
It is also immediately apparent from the definition that
\BE	\label{eqn:DtoN.solnlem.3.2}
	L(j_1,j_2,s)=-L(j_2,j_1,s).
\EE
By a row swap, the boundary coefficient minors have the property
\BE	\label{eqn:DtoN.solnlem.3.4}
	\Msups{\mathcal{C}}{p}{q}{u}{v}(j_0,j_1)=-\Msups{\mathcal{C}}{p}{q}{v}{u}(j_1,j_0).
\EE
Expanding in minors, for any $s,p\in\Z_3$,
\begin{subequations} \label{eqn:DtoN.solnlem.3.5}
\begin{align} \label{eqn:DtoN.solnlem.3.5a}
	\mathcal{D}(j_0,j_1,j_2) &= \Msup{B}{s}{p}{j_s} \Msups{\mathcal{C}}{p+1}{p+2}{s+1}{s+2}(j_{s+1},j_{s+2}) + \Msup{B}{s+1}{p}{j_{s+1}} \Msups{\mathcal{C}}{p+1}{p+2}{s+2}{s}(j_{s+2},j_{s}) + \Msup{B}{s+2}{p}{j_{s+2}} \Msups{\mathcal{C}}{p+1}{p+2}{s}{s+1}(j_{s},j_{s+1}), \\ \label{eqn:DtoN.solnlem.3.5b}
	\M{\mathcal{D}}{s}{s+1}(j_0,j_1,j_2) &= \Msup{B}{s}{p}{j_s} \Msups{\mathcal{C}}{p+1}{p+2}{s}{s+2}(j_{s+1},j_{s+2}) + \Msup{B}{s}{p}{j_{s+1}} \Msups{\mathcal{C}}{p+1}{p+2}{s+2}{s}(j_{s+2},j_{s}) + \Msup{B}{s+2}{p}{j_{s+2}} \Msups{\mathcal{C}}{p+1}{p+2}{s}{s}(j_{s},j_{s+1}), \\ \label{eqn:DtoN.solnlem.3.5c}
	\M{\mathcal{D}}{s}{s+2}(j_0,j_1,j_2) &= \Msup{B}{s}{p}{j_s} \Msups{\mathcal{C}}{p+1}{p+2}{s+1}{s}(j_{s+1},j_{s+2}) + \Msup{B}{s+1}{p}{j_{s+1}} \Msups{\mathcal{C}}{p+1}{p+2}{s}{s}(j_{s+2},j_{s}) + \Msup{B}{s}{p}{j_{s+2}} \Msups{\mathcal{C}}{p+1}{p+2}{s}{s+1}(j_{s},j_{s+1}).
\end{align}
\end{subequations}

We must establish both
\BE \label{eqn:DtoN.solnlem.3.6}
	x_s^r-x_s^{r-1} = y_s^r \qquad \hsforall r\in\{1,2,\ldots,m\},\; s\in\Z_3,
\EE
and
\BE \label{eqn:DtoN.solnlem.3.7}
	2\Delta(\lambda)\sum_{r=0}^m\sum_{s=0}^2 \Msup{B}{s}{p}{r} x_s^r = 0 \qquad \hsforall p\in\Z_3.
\EE

Note that, in the definition~\eqref{eqn:DtoN.solnlem.3:x_s^r} of $x_s^r$, the second and third triple-sums are independent of $r$.
Therefore they cancel in the difference $x_s^r-x_s^{r-1}$.
Hence
\BES
	x_s^r-x_s^{r-1} = \frac{1}{\Delta(\lambda)} \sum_{j_0,j_1,j_2=0}^m \mathcal{D}(j_0,j_1,j_2) [L(j_s,r,s)-L(j_s,r-1,s)] = \frac{1}{\Delta(\lambda)} \Delta(\lambda) y_s^r,
\EES
by equation~\eqref{eqn:DtoN.solnlem.3.1}.
Hence~\eqref{eqn:DtoN.solnlem.3.6} holds.

Substituting the definition of $x_s^r$, expanding the sum over $s$, collating terms with like numbers in the final argument of $L$, and re-expressing as a sum over $s$, the left hand side of equation~\eqref{eqn:DtoN.solnlem.3.7} may be expressed as
\begin{multline*}
	\sum_{s=0}^2 \Bigg[
		2\sum_{r,j_0,j_1,j_2=0}^m \Msup{B}{s}{p}{r} \mathcal{D}(j_0,j_1,j_2) L(j_s,r,s) \\
		+ \sum_{r,j_0,j_1,j_2=0}^m \Msup{B}{s+2}{p}{r} \M{\mathcal{D}}{s}{s+2}(j_0,j_1,j_2) L(j_{s+2},j_s,s) \\
		+ \sum_{r,j_0,j_1,j_2=0}^m \Msup{B}{s+1}{p}{r} \M{\mathcal{D}}{s}{s+1}(j_0,j_1,j_2) L(j_{s+1},j_s,s)
	\Bigg]
\end{multline*}
Using equations~\eqref{eqn:DtoN.solnlem.3.5} to expand the determinants $\mathcal{D}$ in their minors, the bracket evaluates to
\begin{multline} \label{eqn:DtoN.solnlem.3.8}
	2\sum_{r,j_0,j_1,j_2=0}^m \Msup{B}{s}{p}{r} \Msup{B}{s}{p}{j_s} \Msups{\mathcal{C}}{p+1}{p+2}{s+1}{s+2}(j_{s+1},j_{s+2}) L(j_s,r,s) \\
	+ 
	2\sum_{r,j_0,j_1,j_2=0}^m \Msup{B}{s}{p}{r} \Msup{B}{s+1}{p}{j_{s+1}} \Msups{\mathcal{C}}{p+1}{p+2}{s+2}{s}(j_{s+2},j_{s}) L(j_s,r,s) \\
	+ 
	2\sum_{r,j_0,j_1,j_2=0}^m \Msup{B}{s}{p}{r} \Msup{B}{s+2}{p}{j_{s+2}} \Msups{\mathcal{C}}{p+1}{p+2}{s}{s+1}(j_{s},j_{s+1}) L(j_s,r,s) \\
	+ 
	\sum_{r,j_0,j_1,j_2=0}^m \Msup{B}{s+2}{p}{r} \Msup{B}{s}{p}{j_s} \Msups{\mathcal{C}}{p+1}{p+2}{s+1}{s}(j_{s+1},j_{s+2}) L(j_{s+2},j_s,s) \\
	+ 
	\sum_{r,j_0,j_1,j_2=0}^m \Msup{B}{s+2}{p}{r} \Msup{B}{s+1}{p}{j_{s+1}} \Msups{\mathcal{C}}{p+1}{p+2}{s}{s}(j_{s+2},j_{s}) L(j_{s+2},j_s,s) \\
	+ 
	\sum_{r,j_0,j_1,j_2=0}^m \Msup{B}{s+2}{p}{r} \Msup{B}{s}{p}{j_{s+2}} \Msups{\mathcal{C}}{p+1}{p+2}{s}{s+1}(j_{s},j_{s+1}) L(j_{s+2},j_s,s) \\
	+ 
	\sum_{r,j_0,j_1,j_2=0}^m \Msup{B}{s+1}{p}{r} \Msup{B}{s}{p}{j_s} \Msups{\mathcal{C}}{p+1}{p+2}{s}{s+2}(j_{s+1},j_{s+2}) L(j_{s+1},j_s,s) \\
	+ 
	\sum_{r,j_0,j_1,j_2=0}^m \Msup{B}{s+1}{p}{r} \Msup{B}{s}{p}{j_{s+1}} \Msups{\mathcal{C}}{p+1}{p+2}{s+2}{s}(j_{s+2},j_{s}) L(j_{s+1},j_s,s) \\
	+ 
	\sum_{r,j_0,j_1,j_2=0}^m \Msup{B}{s+1}{p}{r} \Msup{B}{s+2}{p}{j_{s+2}} \Msups{\mathcal{C}}{p+1}{p+2}{s}{s}(j_{s},j_{s+1}) L(j_{s+1},j_s,s)
\end{multline}

In the first line of expression~\eqref{eqn:DtoN.solnlem.3.8} there are two identical quadruple-sums.
We switch the roles of indices $r\leftrightarrow j_s$ in one of these sums and see that the summand evaluates to
\BES
	\mbox{(summand of other sum in line 1)}\frac{L(r,j_s,s)}{L(j_s,r,s)}.
\EES
Hence, by equation~\eqref{eqn:DtoN.solnlem.3.2}, the two quadruple-sums in the first line cancel.

In line 5, we apply the map $(r,j_s,j_{s+1},j_{s+2}) \mapsto (j_{s+2},j_{s+1},r,j_s)$, and equation~\eqref{eqn:DtoN.solnlem.3.2} to see that the sum on line 5 cancels with the sum on line 9.

In line 4, we apply the map $(r,j_s,j_{s+1},j_{s+2}) \mapsto (j_{s+2},r,j_{s+1},j_s)$, and equation~\eqref{eqn:DtoN.solnlem.3.4}.
In line 6, we apply the map $(r,j_s,j_{s+1},j_{s+2}) \mapsto (j_{s+2},j_s,j_{s+1},r)$, and equation~\eqref{eqn:DtoN.solnlem.3.2}. Hence line 3 cancels with lines 4 and 6.

In line 7, we apply the map $(r,j_s,j_{s+1},j_{s+2}) \mapsto (j_{s+1},r,j_s,j_{s+2})$, and equation~\eqref{eqn:DtoN.solnlem.3.4}.
In line 8, we apply the map $(r,j_s,j_{s+1},j_{s+2}) \mapsto (j_{s+1},j_s,r,j_{s+2})$, and equation~\eqref{eqn:DtoN.solnlem.3.2}.
Hence line 2 cancels with lines 7 and 8.

We have established that expression~\eqref{eqn:DtoN.solnlem.3.8} evaluates to $0$, so~\eqref{eqn:DtoN.solnlem.3.7} holds.
\end{proof}

\begin{proof}[Proof of lemma~\ref{lem:DtoN.soln.3}(b)]
By definition~\eqref{eqn:DtoN.solnlem.3h:x_s^r}, $x_s^r$ is independent of $r$, so $x_s^r-x_s^{r-1}=0$.
For each $p\in\Z_3$, we expand
\begin{multline*}
	\sum_{r=0}^m \sum_{s=0}^2 \Msup{B}{s}{p}{r} x_s^r = \\
		\frac{1}{\Delta}\sum_{r,j_0,j_1=0}^m \Bigg[ \sum_{s=0}^2 \Msup{B}{s}{p}{r} h_s\Msups{\mathcal{C}}{s+1}{s+2}{s+1}{s+2}(j_0,j_1) + \sum_{s=0}^2 \Msup{B}{s}{p}{r} h_{s+1}\Msups{\mathcal{C}}{s+2}{s}{s+1}{s+2}(j_0,j_1) + \sum_{s=0}^2 \Msup{B}{s}{p}{r} h_{s+2}\Msups{\mathcal{C}}{s}{s+1}{s+1}{s+2}(j_0,j_1) \Bigg].
\end{multline*}
As $\{s,s+1,s+2\}=\{0,1,2\}=\{p,p+1,p+2\}$ are all representations of $\Z_3$, we reindex the sums over $s$ as $\sum_{s=p}^{p+2}$ on the right hand side. Then the bracket evaluates to
\begin{multline} \label{eqn:DtoN.solnlemh.3.1}
	h_p \Big(
		\Msup{B}{p}{p}{r} \Msups{\mathcal{C}}{p+1}{p+2}{p+1}{p+2}(j_0,j_1)
			+ \Msup{B}{p+1}{p}{r} \Msups{\mathcal{C}}{p+1}{p+2}{p+2}{p}(j_0,j_1)
			+ \Msup{B}{p+2}{p}{r} \Msups{\mathcal{C}}{p+1}{p+2}{p}{p+1}(j_0,j_1)
		\Big) \\
	+ h_{p+1} \Big(
		\Msup{B}{p+1}{p}{r} \Msups{\mathcal{C}}{p+2}{p}{p+2}{p}(j_0,j_1)
			+ \Msup{B}{p+2}{p}{r} \Msups{\mathcal{C}}{p+2}{p}{p}{p+1}(j_0,j_1)
			+ \Msup{B}{p}{p}{r} \Msups{\mathcal{C}}{p+2}{p}{p+1}{p+2}(j_0,j_1)
		\Big) \\
	+ h_{p+2} \Big(
		\Msup{B}{p+2}{p}{r} \Msups{\mathcal{C}}{p}{p+1}{p}{p+1}(j_0,j_1)
			+ \Msup{B}{p}{p}{r} \Msups{\mathcal{C}}{p}{p+1}{p+1}{p+2}(j_0,j_1)
			+ \Msup{B}{p+1}{p}{r} \Msups{\mathcal{C}}{p}{p+1}{p+2}{p}(j_0,j_1)
		\Big).
\end{multline}
As each term in expression~\eqref{eqn:DtoN.solnlemh.3.1} is summed over $r,j_0,j_1$ separately, we can cyclically permute the indices $(r,j_0,j_1)\mapsto(j_0,j_1,r)$ in the second term on each line of~\eqref{eqn:DtoN.solnlemh.3.1}, and $(r,j_0,j_1)\mapsto(j_1,r,j_0)$ in the third term on each line of~\eqref{eqn:DtoN.solnlemh.3.1}.
It is then immediate from the definition of $\mathcal{C}$, that the first line of~\eqref{eqn:DtoN.solnlemh.3.1} evalutes to $h_p \mathcal{D}(r,j_0,j_1)$.
However, the second and third lines of~\eqref{eqn:DtoN.solnlemh.3.1} evaluate to
\BES
	h_{p+1} \det
		\BP
			\Msup{B}{p+1}{p}{r} & \Msup{B}{p+1}{p+2}{r} & \Msup{B}{p+1}{p}{r} \\
			\Msup{B}{p+2}{p}{j_0} & \Msup{B}{p+2}{p+2}{j_0} & \Msup{B}{p+2}{p}{j_0} \\
			\Msup{B}{p}{p}{j_1} & \Msup{B}{p}{p+2}{j_1} & \Msup{B}{p}{p}{j_1}
		\EP
	+ h_{p+2} \det
		\BP
			\Msup{B}{p+2}{p}{r} & \Msup{B}{p+2}{p}{r} & \Msup{B}{p+2}{p+1}{r} \\
			\Msup{B}{p}{p}{j_0} & \Msup{B}{p}{p}{j_0} & \Msup{B}{p}{p+1}{j_0} \\
			\Msup{B}{p+1}{p}{j_1} & \Msup{B}{p+1}{p}{j_1} & \Msup{B}{p+1+1}{p}{j_1}
		\EP
	= 0.
\EES
Hence
\BES
	\sum_{r=0}^m \sum_{s=0}^2 \Msup{B}{s}{p}{r} x_s^r = \frac{h_p}{\Delta}\sum_{r,j_0,j_1=0}^m \mathcal{D}(r,j_0,j_1) = h_p.
	\qedhere
\EES
\end{proof}

\subsection{Determinant lemma}

The following lemma is useful in finding the solution of linear system~\eqref{eqn:DtoN.n}.
Although it is not used in the above presented proofs of lemma~\ref{lem:DtoN.soln.2} or lemma~\ref{lem:DtoN.soln.3}, the authors found it helpful for the original derivation of those results.
It is presented here to facilitate the extension of lemma~\ref{lem:DtoN.soln.2} or lemma~\ref{lem:DtoN.soln.3} to arbitrary spatial order $n$.
\begin{lem} \label{lem:detlem}
	Let $n\in\N$, $d\in\N\cup\{0\}$ and $s\in\Z$.
	Define the $d\times d$ matrix $M=M(n,d,s)$ by
	\BES
		\M{M}{j}{k} = \M{\delta}{j-s}{k}-\M{\delta}{j-s}{k-n}
	\EES
	in terms of the Kronecker $\delta$. Then
	\BES
		\det M =
		\begin{cases}
			1 & \mbox{if } s=0, \\
			(-1)^d & \mbox{if } s=n, \\
			(-1)^{ds} & \mbox{if } n|d \mbox{ and } 1\leq s\leq n-1, \\
			0 & \mbox{otherwise.}
		\end{cases}
	\EES
\end{lem}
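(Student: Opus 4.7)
The plan is to compute $\det M$ directly from the Leibniz formula
\[
\det M = \sum_{\sigma \in S_d} \mathrm{sgn}(\sigma) \prod_{k=1}^d M_{\sigma(k),k}.
\]
Since $M_{j,k}$ is nonzero only when $j = k+s$ (value $+1$) or $j = k+s-n$ (value $-1$), a permutation contributes iff $\sigma(k) - k \in \{s,\,s-n\}$ for every $k$, in which case the product of entries is $(-1)^{|S|}$ with $S = \{k : \sigma(k) = k+s-n\}$. Summing the identity $\sigma(k) - k \in \{s, s-n\}$ over $k$ gives $sd - n|S| = 0$, so $n \mid sd$ is necessary for any $\sigma$ to contribute.

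I would first dispatch the boundary and extreme cases. For $s=0$, the choice $\sigma(k) = k-n$ is invalid for $k \le n$, and a simple forward induction then forces $\sigma = \mathrm{id}$ and $\det M = 1$. For $s=n$, a dual backward induction starting from large $k$ forces $\sigma = \mathrm{id}$, but every factor is now $-1$, giving $(-1)^d$. For $s<0$ or $s>n$ both candidate shifts have the same sign, so any feasible $\sigma$ would move every index strictly in one direction, which is impossible; hence $\det M = 0$.

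The substantive range is $1 \le s \le n-1$. Here I would exploit the residue classes $R_r = \{r, r+n, r+2n,\ldots\} \cap \{1,\ldots,d\}$ for $r \in \{1,\ldots,n\}$: both $k+s$ and $k+s-n$ lie in the single class $R_{(r+s) \bmod n}$, so a feasible $\sigma$ restricts to bijections $R_r \to R_{(r+s) \bmod n}$ for every $r$. For this to be possible for all $r$, the sizes $|R_r|$ must be invariant under the shift $r \mapsto r+s \pmod n$; since $1 \le s \le n-1$, this forces all $|R_r|$ to coincide, i.e., $n \mid d$. Otherwise no feasible $\sigma$ exists and $\det M = 0$.

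When $n \mid d$, write $d = qn$ and index $k = r+in \in R_r$ by its level $i \in \{0,\ldots,q-1\}$. A direct check of the endpoint constraints (level $0$ for classes with $r+s \le n$, level $q-1$ for those with $r+s > n$) forces the restricted bijection to preserve the level, yielding a single feasible $\sigma$. Counting gives $|S| = sq$ (each of the $s$ classes with $r > n-s$ contributes $q$ indices to $S$), so the entry product is $(-1)^{sq}$; and on each level the map is the cyclic shift of $\{1,\ldots,n\}$ by $s$, whose sign is $(-1)^{n-\gcd(n,s)}$, giving $\mathrm{sgn}(\sigma) = (-1)^{q(n-\gcd(n,s))}$. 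Combining, $\det M = (-1)^{q(s + n - \gcd(n,s))}$. The main remaining obstacle is verifying that this equals $(-1)^{ds} = (-1)^{nqs}$, which reduces to a short parity check: splitting on whether $n$ is even or odd, and using that $\gcd(n,s)$ has the same parity as $s$ when $n$ is even while both $n$ and $\gcd(n,s)$ are odd when $n$ is odd, one sees that $s + n - \gcd(n,s) \equiv ns \pmod 2$ in every case.
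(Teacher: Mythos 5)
Your proof is correct, but it takes a genuinely different route from the paper's. The paper disposes of $s<0$, $s=0$, $s=n$, $s>n$ by triangularity, computes $\det M(n,n,s)=(-1)^{ns}$ explicitly via row swaps, shows $\det M=0$ for $1\le s\le n-1$ and $0\le d\le n-1$ by exhibiting a zero row, and then closes with an induction in steps of $n$, using the block decomposition of $M(n,d+n,s)$ into $M(n,n,s)$, $M(n,d,s)$ and two corner blocks whose vanishing rows and columns force the determinant to factor. You instead expand by the Leibniz formula and show that at most one permutation contributes: the feasibility condition $\sigma(k)-k\in\{s,s-n\}$ gives $n\mid sd$ at once, the residue-class decomposition forces $n\mid d$, and the level analysis pins down the unique $\sigma$, whose sign and entry product you evaluate in closed form as $(-1)^{q(s+n-\gcd(n,s))}$ before reconciling with $(-1)^{ds}$ by a parity check (which I have verified: split on the parity of $n$). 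Your argument is more self-contained --- no induction, and all the ``otherwise $0$'' cases fall out of a single feasibility analysis --- and it yields strictly more information, namely the unique surviving term of the expansion. The price is the $\gcd$ bookkeeping at the end, which the paper's induction avoids entirely by anchoring everything to the one computation $\det M(n,n,s)=(-1)^{ns}$.

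One step needs tightening. You assert that invariance of $r\mapsto|R_r|$ under the shift $r\mapsto r+s\pmod n$ ``forces all $|R_r|$ to coincide'' because $1\le s\le n-1$. For a general function on $\Z_n$ this inference fails: the shift by $s$ is transitive only when $\gcd(n,s)=1$, and otherwise invariance gives constancy only on each coset of the subgroup generated by $g:=\gcd(n,s)$. The conclusion does hold, but you must use the specific shape of the sizes: writing $d=qn+t$ with $0\le t<n$, the residues with $|R_r|=q+1$ form exactly the initial segment $\{1,\dots,t\}$, and a nonempty such segment cannot be a union of cosets of $\langle g\rangle$. Concretely, if $t\ge1$ then the orbit of $1$ forces $1+n-g\le t$ while the orbit of $n$ (which lies outside the segment) forces $g>t$, so $n<2g-1$; but $g$ is a proper divisor of $n$ (since $g\le s\le n-1$), so $g\le n/2$, a contradiction. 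With that patch the proof is complete.
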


\begin{proof}
	If $s<0$ ($s>n$) then $M$ is upper (lower) triangular with $0$ along the diagonal, so has determinant $0$.
	If $s=0$ ($s=n$) then $M$ is upper (lower) triangular with $1$ ($-1$) along the diagonal, so has determinant $1$ ($(-1)^d$).
	It remains only to confirm the result holds for $1\leq s\leq n-1$.
	
	Note that, provided $1\leq s\leq n-1$,
	\begin{align*}
		\det M(n,n,s) &= \det\left[\left(\M{\delta}{j-s}{k}-\M{\delta}{j-s}{k-n}\right)_{j,k=1}^n\right] \\
		&= \det\left[\left(\M{\delta}{j}{k}\right)_{j,k=1}^{n-s}-\left(\M{\delta}{j}{k}\right)_{j,k=n-s+1}^n\right](-1)^{(n-1)s},
	\end{align*}
	after $(n-1)s$ row swaps.
	Hence
	\BE \label{eqn:detlem.proof.1}
		\det M(n,n,s) = (-1)^s(-1)^{(n-1)s}=(-1)^{ns}.
	\EE
	
	If $d=0$ then $n|d$ and, by convention, $\det M=1=(-1)^{0s}$.
	Suppose $n>d$ and $1\leq s \leq d$.
	Then row $s$ of $M$ is given by
	\BES
		\left(\M{M}{s}{k}\right)_{k=1}^d = \left( \M{\delta}{0}{k}-\M{\delta}{0}{k-n} \right)_{k=1}^d = (0)_{k=1}^d,
	\EES
	as $0<k$ and $0>d-n\geq k-n$.
	Hence $\det M=0$.
	Now suppose $n>d$ and $d\leq s \leq n-1$.
	Then row $d$ of $M$ is given by
	\BES
		\left(\M{M}{d}{k}\right)_{k=1}^d = \left( \M{\delta}{d-s}{k}-\M{\delta}{d-s}{k-n} \right)_{k=1}^d = (0)_{k=1}^d,
	\EES
	as $d-s\leq0<k$ and $k-n\leq d-n<d-s$. Hence $\det M=0$.
	So the result holds for $1\leq s\leq n-1$ with $0\leq d\leq n-1$.
	Next we provide an inductive step of size $n$ on $d$.
	
	Fix some $n\in\N$, $d\in\N\cup\{0\}$ and $1\leq s \leq n-1$ and suppose
	\BE \label{eqn:detlem.proof.2}
		\det M(n,d,s) = \begin{cases} (-1)^{ds} & \mbox{if } n|d, \\ 0 & \mbox{otherwise.} \end{cases}
	\EE
	Now consider the matrix
	\BES
		M(n,d+n,s) = \BP M(n,n,s) & A \\ B & M(n,d,s) \EP,
	\EES
	where
	\BES
		\M{A}{j}{k}=-\M{\delta}{j-s}{k}, \qquad \M{B}{j}{k}=\M{\delta}{j-s}{k-n}.
	\EES
	As the the first $s$ rows of $A$ are $0$, the first $s$ rows of $M(n,d+n,s)$ have their only nonzero entry in columns $n-s+1, \ldots, n$.
	As the first $n-s$ columns of $B$ are $0$, the first $n-s$ columns of $M(n,d+n,s)$ have their only nonzero entry in rows $s+1, \ldots, n$.
	Hence
	\BES
		\det M(n,d+n,s) = \det M(n,n,s) \det M(n,d,s),
	\EES
	so, by equations~\eqref{eqn:detlem.proof.1} and~\eqref{eqn:detlem.proof.2},
	\BES
		\det M(n,d+n,s) = \begin{cases} (-1)^{(d+n)s} & \mbox{if } n|d, \mbox{ equivalently } n|(d+n), \\ 0 & \mbox{otherwise.} \end{cases}
	\EES
	Hence, by induction, the result holds for all $n$, $d$, and $s$.
\end{proof}

\section{Appendix: Proof of Lemma~\ref{lem:uniq.solve.lem.2}} \label{sec:AppB}

\begin{proof}[Proof of lemma~\ref{lem:uniq.solve.lem.2}]
	We present a full proof of (a).
	The proof of (b) is entirely analogous.
	If $\lambda\in\C^+$, then
	\BE \label{eqn:DominantExp}
		|E_m(\lambda)| > \ldots > |E_0(\lambda)| = 1 = |E_0(-\lambda)| > \ldots > |E_m(-\lambda)|.
	\EE
	Hence, as $\lambda\to\infty$ from within $\C^+_\theta$,
	\BES
		\Delta(\lambda) = \delta^+(\lambda) + \mathcal{O}\left(E_{m-1}(\lambda)\right).
	\EES
	It is immediate from the definition of $\Msup{B}{j}{k}{r}$ that
	\BES
		\delta^+(\lambda) = E_m(\lambda)\left[ k_0+k_1\lambda^{-1}+k_2\lambda^{-2} \right],
	\EES
	for some coefficients $k_0,k_1,k_2\in\C$, of which we have assumed not all are zero.
	Therefore, any term in $x_0(\lambda)\Delta(\lambda)$ which is $\mathcal{O}(E_m(\lambda)\lambda^{-3})$ certainly corresponds to a term of $x_0(\lambda)$ that is $\mathcal{O}(\lambda^{-1})$, which may be ignored.
	The remainder of the proof establishes that the dominant term in $x_0(\lambda)\Delta(\lambda)$ is $\gamma^+(\lambda)$.
	
	Integrating by parts thrice, and applying the Riemann-Lebesgue lemma to control the remainder,
	\begin{subequations} \label{eqn:qT.asymp}
	\begin{align} \notag
		\hat{q}_\tau^r(\lambda) &= \int_{\eta_{r-1}}^{\eta_r}\re^{-i\lambda x}q(x,\tau)\D x = \left[\re^{-i\lambda x}\left(\frac{i}{\lambda}q(x,\tau)+\frac{1}{\lambda^2}q_x(x,\tau)\right)\right]_{x=\eta_{r-1}}^{x=\eta_r} + \mathcal{O}\left(E_r(\lambda)\lambda^{-3}\right) \\ \label{eqn:qT.asymp1}
		&= E_r(\lambda)\left[ \lambda^{-1}iq(\eta_r,\tau) + \lambda^{-2}q_x(\eta_r,\tau) \right] + \mathcal{O}\left(E_r(\lambda)\lambda^{-3}\right), \\ \label{eqn:qT.asymp2}
		\hat{q}_\tau^{r}(-\lambda) &= E_{r-1}(-\lambda)\left[ \lambda^{-1}iq(\eta_{r-1},\tau) - \lambda^{-2}q_x(\eta_{r-1},\tau) \right] + \mathcal{O}\left(E_{r-1}(\lambda)\lambda^{-3}\right).
	\end{align}
	\end{subequations}
	By asymptotic expansions~\eqref{eqn:qT.asymp} and~\eqref{eqn:DominantExp}, all dominant terms must be among those listed below
	\begin{multline} \label{eqn:x0Numerator.1}
		x_0(\lambda)\Delta(\lambda) = \left( \sum_{j=1}^m \left[\Msup{B}{0}{0}{j}\Msup{B}{1}{1}{m}-\Msup{B}{1}{0}{m}\Msup{B}{0}{1}{j}\right](-1)\hat{q}_\tau^j(\lambda) \right. \\
		\left. + \sum_{j,k=0}^m \frac{1}{2} \left[\Msup{B}{1}{0}{j}\Msup{B}{1}{1}{k}-\Msup{B}{1}{0}{k}\Msup{B}{1}{1}{j}\right]
			\left\{\begin{smallmatrix}1\text{\textup{ if }}j<k \\ 0\text{\textup{ if }}j=k \\ -1\text{\textup{ if }}j>k \end{smallmatrix}\right\} \sum_{l=\min\{j,k\}+1}^{\max\{j,k\}} \hat{q}_\tau^l(-\lambda) \right) + \mathcal{O}\left(E_{m-1}(\lambda)\lambda^{-1}\right)
	\end{multline}
	The first term is dominant in the sum over $l$, and the double sum over $j,k$ can be rewritten
	\BES
		\sum_{j=0}^{m-1}\sum_{k=j+1}^m \det\BP\Msup{B}{1}{0}{j}&\Msup{B}{1}{1}{j}\\\Msup{B}{1}{0}{k}&\Msup{B}{1}{1}{k}\EP \hat{q}_\tau^{j+1}(-\lambda),
	\EES
	whose dominant terms occur for $k=m$.
	Expanding the determinant, this sum evaluates to
	\begin{multline}
		\frac{1}{4}\sum_{j=0}^{m-1}E_j(\lambda)E_m(\lambda)\det
			\BP
				-\Msup{b}{0}{0}{j}\frac{1}{i\lambda}+\Msup{b}{1}{0}{j} & -\Msup{b}{0}{1}{j}\frac{1}{i\lambda}+\Msup{b}{1}{1}{j} \\
				B_0 & B_1
			\EP
			E_j(-\lambda)\left[ \lambda^{-1}iq(\eta_j,\tau) - \lambda^{-2}q_x(\eta_j,\tau) \right] \\
			+ \mathcal{O}\left(E_m(\lambda)\lambda^{-3}\right),
	\end{multline}
	where
	\BES
		B_0 := -\Msup{b}{0}{0}{m}\frac{1}{i\lambda}+\Msup{b}{1}{0}{m}
		\qquad\qquad\mbox{and}\qquad\qquad
		B_1 := -\Msup{b}{0}{1}{m}\frac{1}{i\lambda}+\Msup{b}{1}{1}{m}
	\EES
	are defined for notational convenience.
	Similarly, the first sum in equation~\eqref{eqn:x0Numerator.1} can be expanded to
	\BES
		\frac{-1}{4} \sum_{j=1}^m E_j(-\lambda)E_m(\lambda) \det
			\BP
				\Msup{b}{0}{0}{j}\frac{1}{i\lambda}+\Msup{b}{1}{0}{j} & \Msup{b}{0}{1}{j}\frac{1}{i\lambda}+\Msup{b}{1}{1}{j} \\
				B_0 & B_1
			\EP
			E_j(\lambda)\left[ \lambda^{-1}iq(\eta_j,\tau) + \lambda^{-2}q_x(\eta_j,\tau) \right] + \mathcal{O}\left(E_m(\lambda)\lambda^{-3}\right).
	\EES
	Exploiting linearity of the determinants in their top rows, and after some judicious cancellation, we arrive at
	\begin{multline} \label{eqn:x0Numerator.2}
		x_0(\lambda)\Delta(\lambda) = \frac{-1}{2}E_m(\lambda)
			\lambda^{-2} \sum_{j=1}^{m-1} \left(
				\det \BP \Msup{b}{0}{0}{j} & \Msup{b}{0}{1}{j} \\ B_0 & B_1 \EP q(\eta_j,\tau)
				+
				\det \BP \Msup{b}{1}{0}{j} & \Msup{b}{1}{1}{j} \\ B_0 & B_1 \EP q_x(\eta_j,\tau)
			\right)
		\\
		+ \frac{1}{4}E_m(\lambda) \det
			\BP
				-\Msup{b}{0}{0}{0}\frac{1}{i\lambda}+\Msup{b}{1}{0}{0} & -\Msup{b}{0}{1}{0}\frac{1}{i\lambda}+\Msup{b}{1}{1}{0} \\
				B_0 & B_1
			\EP
			\left[ \lambda^{-1}iq(\eta_j,\tau) - \lambda^{-2}q_x(\eta_j,\tau) \right]
		\\
		- \frac{1}{4}E_m(\lambda) \det
			\BP
				\Msup{b}{0}{0}{m}\frac{1}{i\lambda}+\Msup{b}{1}{0}{m} & \Msup{b}{0}{1}{m}\frac{1}{i\lambda}+\Msup{b}{1}{1}{m} \\
				B_0 & B_1
			\EP
			\left[ \lambda^{-1}iq(\eta_j,\tau) + \lambda^{-2}q_x(\eta_j,\tau) \right]
		+ \mathcal{O}\left(E_m(\lambda)\lambda^{-3}\right).
	\end{multline}
	
	Discarding $\mathcal{O}\left(E_m(\lambda)\lambda^{-3}\right)$ terms, the lower rows of the determinants in the first line of equation~\eqref{eqn:x0Numerator.2} may be replaced by $(\Msup{b}{1}{0}{m},\Msup{b}{1}{1}{m})$.
	As
	\BES
		\frac{1}{4}E_m(\lambda)\det
			\BP
				-\Msup{b}{0}{0}{0}\frac{1}{i\lambda}+\Msup{b}{1}{0}{0} & -\Msup{b}{0}{1}{0}\frac{1}{i\lambda}+\Msup{b}{1}{1}{0} \\
				B_0 & B_1
			\EP
		= \delta^+(\lambda) - \frac{1}{2i\lambda} E_m(\lambda) \det \BP \Msup{b}{0}{0}{0} & \Msup{b}{0}{1}{0} \\ \Msup{b}{1}{0}{m} & \Msup{b}{1}{1}{m} \EP + \mathcal{O}\left(E_m(\lambda)\lambda^{-2}\right),
	\EES
	the second line of equation~\eqref{eqn:x0Numerator.2} can be replaced by
	\BES
		- \frac{1}{2}E_m(\lambda) \lambda^{-2} \det \BP \Msup{b}{0}{0}{0} & \Msup{b}{0}{1}{0} \\ \Msup{b}{1}{0}{m} & \Msup{b}{1}{1}{m} \EP q(\eta_j,\tau) + \mathcal{O}\left(\delta^+(\lambda)\lambda^{-1}\right).
	\EES
	Moreover, $E_m(\lambda)\lambda^{-3}=\mathcal{O}(\delta^+(\lambda)\lambda^{-1})$, so the new error term $\mathcal{O}(\delta^+(\lambda)\lambda^{-1})$ includes the previous error term $\mathcal{O}(E_m(\lambda)\lambda^{-3})$.
	The matrix whose determinant is calculated in the third line of equation~\eqref{eqn:x0Numerator.2} has linearly dependent $\mathcal{O}(1)$ and $\mathcal{O}(\lambda^{-1})$ components of its entries, so it may be replaced by a simpler determinant, which is a constant multiple of $\lambda^{-1}$.
	So the second and third lines of equation~\eqref{eqn:x0Numerator.2} simplify to the additional terms $j=0$ and $j=m$ in the first sum on the first line of equation~\eqref{eqn:x0Numerator.2} (plus lower order terms).
	That is
	\begin{multline} \label{eqn:x0Numerator.3}
		x_0(\lambda)\Delta(\lambda) = \frac{-1}{2}E_m(\lambda) \lambda^{-2}
			\left[
				\sum_{j=0}^{m} \det \BP \Msup{b}{0}{0}{j} & \Msup{b}{0}{1}{j} \\ \Msup{b}{1}{0}{m} & \Msup{b}{1}{1}{m} \EP q(\eta_j,\tau)
				+
				\sum_{j=1}^{m-1} \det \BP \Msup{b}{1}{0}{j} & \Msup{b}{1}{1}{j} \\ \Msup{b}{1}{0}{m} & \Msup{b}{1}{1}{m} \EP q_x(\eta_j,\tau)
			\right] \\
		+ \mathcal{O}\left(\delta^+(\lambda)\lambda^{-1}\right).
	\end{multline}
	Note that
	\BES
		\det \BP \Msup{b}{1}{0}{m} & \Msup{b}{1}{1}{m} \\ \Msup{b}{1}{0}{m} & \Msup{b}{1}{1}{m} \EP = 0,
	\EES
	so the upper limit of the latter sum in equation~\eqref{eqn:x0Numerator.3} may be increased to $m$. Further,
	\BES
		E_m(\lambda) \det \BP \Msup{b}{1}{0}{0} & \Msup{b}{1}{1}{0} \\ \Msup{b}{1}{0}{m} & \Msup{b}{1}{1}{m} \EP = 4\delta^+(\lambda) + \mathcal{O}\left(E_m(\lambda)\lambda^{-1}\right),
	\EES
	so the lower limit of the latter sum in equation~\eqref{eqn:x0Numerator.3} may be reduced to $0$, and
	\BES
		x_0(\lambda)\Delta(\lambda) = \frac{-1}{2\lambda^2}E_m(\lambda) \sum_{j=0}^{m}\left[ \det\BP\Msup{b}{0}{0}{j}&\Msup{b}{0}{1}{j}\\\Msup{b}{1}{0}{m}&\Msup{b}{1}{1}{m}\EP q(\eta_j,\tau) + \det\BP\Msup{b}{1}{0}{j}&\Msup{b}{1}{1}{j}\\\Msup{b}{1}{0}{m}&\Msup{b}{1}{1}{m}\EP q_x(\eta_j,\tau) \right] + \mathcal{O}\left(\delta^+(\lambda)\lambda^{-1}\right).
	\EES
	Finally, exploiting linearity of the determinants in their first rows, and applying the multipoint conditions~\eqref{eqn:IMVP:MC},
	\BES
		x_0(\lambda)\Delta(\lambda) = \gamma^+(\lambda) + \mathcal{O}\left(\delta^+(\lambda)\lambda^{-1}\right). \qedhere
	\EES
\end{proof}

\bibliographystyle{amsplain}
\bibliography{dbrefs}

\providecommand{\bysame}{\leavevmode\hbox to3em{\hrulefill}\thinspace}
\providecommand{\MR}{\relax\ifhmode\unskip\space\fi MR }
\providecommand{\MRhref}[2]{%
  \href{http://www.ams.org/mathscinet-getitem?mr=#1}{#2}
}
\providecommand{\href}[2]{#2}
\begin{thebibliography}{10}

\bibitem{AF1997a}
M.~J. Ablowitz and A.~S. Fokas, \emph{Complex variables}, Cambridge Texts in
  Applied Mathematics, Cambridge University Press, 1997.

\bibitem{APSS2015a}
M.~Asvestas, E.~P. Papadopoulou, A.~G. Sifalakis, and Y.~G. Saridakis,
  \emph{The unified transform for a class of reaction-diffusion problems with
  discontinuous time dependent parameters}, Proceedings of the World Congress
  on Engineering, vol.~1, 2015.

\bibitem{bastys2005}
A.~Bastys, F.~Ivanauskas, and M.~Sapagovas, \emph{An explicit solution of a
  parabolic equation with nonlocal boundary conditions}, Lithuanian
  Mathematical Journal \textbf{45} (2005), no.~3, 257--271.

\bibitem{CEH1987a}
J.~R. Cannon, S.~P. Esteva, and J.~Van Der~Hoek, \emph{A {G}alerkin procedure
  for the diffusion equation subject to the specification of mass}, SIAM
  Journal on Numerical Analysis \textbf{24} (1987), no.~3, 499--515.

\bibitem{CLW1990a}
J.~R. Cannon, Y.~Lin, and S.~Wang, \emph{An implicit finite difference scheme
  for the diffusion equation subject to mass specification}, International
  Journal of Engineering Science \textbf{28} (1990), 573--578.

\bibitem{DPS2014a}
B.~Deconinck, B.~Pelloni, and N.~E. Sheils, \emph{Non-steady-state heat
  conduction in composite walls}, Proc. R. Soc. Lond. Ser. A Math. Phys. Eng.
  Sci. \textbf{470} (2014), no.~2165, 20130605.

\bibitem{DSS2016a}
B.~Deconinck, N.~E. Sheils, and D.~A. Smith, \emph{The linear {K}d{V} equation
  with an interface}, Comm. Math. Phys. \textbf{347} (2016), 489--509.

\bibitem{Deh2003a}
M.~Dehghan, \emph{On the numerical solution of the diffusion equation with a
  nonlocal boundary condition}, Mathematical Problems in Engineering
  \textbf{2003} (2003), 81--92.

\bibitem{FS1991b}
G.~Fairweather and R.~D. Saylor, \emph{The reformulation and numerical
  solutions of certain nonclassical initial-boundary value problems}, SIAM J.
  Sci. Stat. Comput. \textbf{12} (1991), no.~1, 127--144.

\bibitem{FF2008a}
N.~Flyer and A.~S. Fokas, \emph{A hybrid analytical numerical method for
  solving evolution partial differential equations. {I}. the half-line}, Proc.
  R. Soc. Lond. Ser. A Math. Phys. Eng. Sci. \textbf{464} (2008), no.~2095,
  1823--1849.

\bibitem{Fok2001a}
A.~S. Fokas, \emph{Two dimensional linear partial differential equations in a
  convex polygon}, Proc. R. Soc. Lond. Ser. A Math. Phys. Eng. Sci.
  \textbf{457} (2001), 371--393.

\bibitem{Fok2008a}
\bysame, \emph{A unified approach to boundary value problems}, CBMS-SIAM, 2008.

\bibitem{FG1997a}
A.~S. Fokas and I.~M. Gel'fand, \emph{Algebraic aspects of integrable systems},
  Progress in nonlinear differential equations and their applications,
  Birkh\"{a}user, 1997.

\bibitem{FM2013a}
A.~S. Fokas and D.~Mantzavinos, \emph{The unified method for the heat equation:
  {I}. {N}on-separable boundary conditions and non-local constraints in one
  dimension}, European J. Appl. Math. \textbf{24} (2012), 857--886.

\bibitem{FP2001a}
A.~S. Fokas and B.~Pelloni, \emph{Two-point boundary value problems for linear
  evolution equations}, Math. Proc. Cambridge Philos. Soc. \textbf{131} (2001),
  521--543.

\bibitem{fokas2014unified}
\bysame, \emph{Unified transform for boundary value problems: Applications and
  advances}, vol. 141, SIAM, 2015.

\bibitem{FS2016a}
A.~S. Fokas and D.~A. Smith, \emph{Evolution {P}{D}{E}s and augmented
  eigenfunctions. {F}inite interval}, Adv. Differential Equations \textbf{21}
  (2016), no.~7/8, 735--766.

\bibitem{gupta1998}
C.~P. Gupta, \emph{A generalized multi-point boundary value problem for second
  order ordinary differential equations}, Applied Mathematics and Computation
  \textbf{89} (1998), no.~1, 133--146.

\bibitem{KPPS2017a}
E.~Kesici, B.~Pelloni, T.~Pryer, and D.~A. Smith, \emph{A numerical
  implementation of the unified {F}okas transform for evolution problems on a
  finite interval}, Euro. J. Appl. Math. (2017), (in press).

\bibitem{Lan1931a}
R.~E. Langer, \emph{The zeros of exponential sums and integrals}, Bull. Amer.
  Math. Soc. \textbf{37} (1931), 213--239.

\bibitem{Loc2000a}
J.~Locker, \emph{Spectral theory of non-self-adjoint two-point differential
  operators}, Mathematical Surveys and Monographs, vol.~73, American
  Mathematical Society, Providence, Rhode Island, 2000.

\bibitem{Loc2008a}
\bysame, \emph{Eigenvalues and completeness for regular and simply irregular
  two-point differential operators}, vol. 195, Memoirs of the American
  Mathematical Society, no. 911, American Mathematical Society, Providence,
  Rhode Island, 2008.

\bibitem{ma1997}
R.~Ma, \emph{Existence theorems for a second order three-point boundary value
  problem}, Journal of Mathematical Analysis and Applications \textbf{212}
  (1997), no.~2, 430 -- 442.

\bibitem{PS2008a}
A.~P. Palamides and G.~Smyrlis, \emph{Positive solutions to a singular
  third-order three-point boundary value problem with an indefinitely signed
  {G}reen's function}, Nonlinear Analysis: Theory, Methods \& Applications
  \textbf{68} (2008), no.~7, 2104--2118.

\bibitem{Pel2004a}
B.~Pelloni, \emph{Well-posed boundary value problems for linear evolution
  equations on a finite interval}, Math. Proc. Cambridge Philos. Soc.
  \textbf{136} (2004), 361--382.

\bibitem{Pel2005a}
\bysame, \emph{The spectral representation of two-point boundary-value problems
  for third-order linear evolution partial differential equations}, Proc. R.
  Soc. Lond. Ser. A Math. Phys. Eng. Sci. \textbf{461} (2005), 2965--2984.

\bibitem{Pel2015a}
\bysame, \emph{Review article: {A}dvances in the study of boundary value
  problems for nonlinear integrable {PDE}s}, Nonlinearity \textbf{28} (2015),
  no.~2, R1.

\bibitem{PS2013a}
B.~Pelloni and D.~A. Smith, \emph{Spectral theory of some non-selfadjoint
  linear differential operators}, Proc. R. Soc. Lond. Ser. A Math. Phys. Eng.
  Sci. \textbf{469} (2013), no.~2154, 20130019.

\bibitem{Pin2011a}
M.~A. Pinsky, \emph{Partial differential equations and boundary-value problems
  with applications. {T}hird edition}, American Mathematical Society,
  Providence, Rhode Island, 2011.

\bibitem{DS2015a}
N.~E. Sheils and B.~Deconinck, \emph{Interface problems for dispersive
  equations}, Studies in Applied Mathematics \textbf{134} (2015), no.~3,
  253--275.

\bibitem{SS2015a}
N.~E. Sheils and D.~A. Smith, \emph{Heat equation on a network using the
  {F}okas method}, Journal of Physics A: Mathematical and Theoretical
  \textbf{48} (2015), no.~33, 21 pp.

\bibitem{Smi2011a}
D.~A. Smith, \emph{Spectral theory of ordinary and partial linear differential
  operators on finite intervals}, Phd, University of Reading, 2011.

\bibitem{Smi2012a}
\bysame, \emph{Well-posed two-point initial-boundary value problems with
  arbitrary boundary conditions}, Math. Proc. Cambridge Philos. Soc.
  \textbf{152} (2012), 473--496.

\bibitem{Smi2018b}
\bysame, \emph{{G}it{L}ab repository: multipoint-value-problems-code},
  \url{https://gitlab.com/dazsmith/multipoint-value-problems-code/blob/9da66b98a15f6c29460917f48acffb709029a16c/mulipoint.ipynb},
  2018.

\end{thebibliography}

\end{document}